\newcommand{\lra}{\longrightarrow}
\newcommand{\lla}{\longleftarrow}
\newcommand{\ldual}[1]{\mathord{{\let\nolimits\relax\sideset{^\wedge}{}{#1}}}}
\newcommand{\laction}[2]{\mathord{{\let\nolimits\relax\sideset{^{#1}}{}{#2}}}}
\newcommand{\conj}[2]{\mathord{{\let\nolimits\relax\sideset{^{#1}}{}{#2}}}}
\def\CA{{\mathscr A}}
\def\CB{{\mathscr B}}
\def\CC{{\mathscr C}}
\def\CD{{\mathscr D}}
\def\CM{{\mathscr M}}
\def\CN{{\mathscr N}}
\def\CV{{\mathscr V}}
\def\CX{{\mathscr X}}
\numberwithin{equation}{section}
\newtheorem{Theorem}{Theorem}[section]
\newtheorem{Corollary}[Theorem]{Corollary}
\newtheorem{Lemma}[Theorem]{Lemma}
\newtheorem{Proposition}[Theorem]{Proposition}
\newtheorem{Conjecture}[Theorem]{Conjecture}
{ \theoremstyle{definition}
\newtheorem{Definition}[Theorem]{Definition}
\newtheorem{Example}[Theorem]{Example}
\newtheorem{Remark}[Theorem]{Remark} }
\begin{document}


\newcommand{\arXivNumber}{1503.02783}

\renewcommand{\PaperNumber}{005}

\FirstPageHeading

\ShortArticleName{Weighted Tensor Products of Joyal Species, Graphs, and Charades}

\ArticleName{Weighted Tensor Products of Joyal Species,\\ Graphs, and Charades}

\Author{Ross {STREET}}

\AuthorNameForHeading{R.~Street}

\Address{Centre of Australian Category Theory, Macquarie University, Australia}
\Email{\href{mailto:ross.street@mq.edu.au}{ross.street@mq.edu.au}}
\URLaddress{\url{http://maths.mq.edu.au/~street/}}

\ArticleDates{Received August 18, 2015, in f\/inal form January 14, 2016; Published online January 17, 2016}

\Abstract{Motivated by the weighted Hurwitz product on sequences in an algebra,
we produce a family of monoidal structures on the category of Joyal species.
We suggest a~family of tensor products for charades.
We begin by seeing weighted derivational algebras and weighted Rota--Baxter algebras
as special monoids and special semigroups, respectively, for the same monoidal structure
on the category of graphs in a monoidal additive category.
Weighted derivations are lifted to the categorical level.}

\Keywords{weighted derivation; Hurwitz series; monoidal category; Joyal species; convolution; Rota--Baxter operator}

\Classification{18D10; 05A15; 18A32; 18D05; 20H30; 16T30}

\section{Introduction}\label{Intro}

The weighted Hurwitz product arises naturally in the study of weighted derivations
and weighted Rota--Baxter operators; see \cite{GKZ2008, GKZ2014} and their references.
We review these concepts from a~ca\-tegorical perspective in Section~\ref{Review}.

In Section~\ref{gimac} we present a monoidal structure on the category
$\operatorname{Gph}\CV$ of graphs in a monoidal additive category $\CV$.
We show how these derivations and operators on a monoid $A$ in $\CV$ can be
viewed as monoid and semigroup structures on particular graphs constructed from~$A$.

The main theme of the paper is to lift concepts def\/ined on algebras (or monoids)
to concepts def\/ined on monoidal categories\footnote{Some authors call this process ``categorif\/ication''.}.
The $\lambda$-weighted Hurwitz product is traditionally def\/ined on the abelian group
$A^{\mathbb{N}}$ of sequences in an algebra~$A$.
This lifts in an obvious way, for a set $\Lambda$, to a def\/inition of $\Lambda$-weighted
product on the category $\CV^{\mathbb{N}}$ of sequences of objects in
a reasonable monoidal $\CV$.
Rather than $\CV^{\mathbb{N}}$, our interest here is in the category $[\mathfrak{S},\CV]$
of $\CV$-valued Joyal species \cite{Species} and a weighted version of the convolution product, called Cauchy product in~\cite{AgMa2010}.
This is fully discussed in Sections~\ref{Ltenspecies}--\ref{weightedbimonoidalefam} and \ref{titpa},
and motivates a def\/inition of weighted categorical derivation in Section~\ref{wcd}.
The weighted tensors def\/ine an interesting family of tensor products for linear representations
of the symmetric groups which, by choosing specif\/ic weights, include the Cauchy product
and the Heisenberg product~\cite{MoreiraPhD}.

Finally, Sections~\ref{charades} and \ref{tds} suggest and explore a weighted tensor product for
species on f\/inite vector spaces (or charades~\cite{Kapr1995, UCL}).
This generalizes, in particular, the essentially classical tensor product of representations
of the general linear groups over a f\/inite f\/ield, proved braided in~\cite{GLFq}.

Further insights into the subject of this paper appear in~\cite{GaSt}.

\section{Review of weighted derivations and Rota--Baxter operators}\label{Review}

The inspiration for the family of tensor products on species came from the $\lambda$-weighted
product of Hurwitz series as discussed in \cite{GKZ2008, GKZ2014} and their references.
They begin by def\/ining a {\em derivation of weight~$\lambda$}
on an algebra $A$ over a commutative ring $k$,
with given $\lambda \in k$, to be a $k$-module morphism $d\colon A \to A$ satisfying $d(1)=0$ and
\begin{gather*}
d(ab) = d(a)b + a d(b) + \lambda d(a)d(b) .
\end{gather*}

They note the generalized Leibnitz rule
\begin{gather*}
d^n(ab)=\sum_{k=0}^n{\sum_{j=0}^{n-k}{\binom{n}{k}\binom{n-k}{j}\lambda^kd^{n-j}(a)d^{k+j}(b)}}   .
\end{gather*}

However, I prefer to write this in the form
\begin{gather}\label{genLeib}
d^n(ab)=\sum_{n=r+s+t}{\binom{n}{r,s,t}\lambda^td^{r+t}(a)d^{s+t}(b)}
\end{gather}
to emphasise the relationship to the trinomial expansion rule for $(x+y+\lambda xy)^n$.
Here
\begin{gather*}
\binom{n}{r,s,t} = \frac{n!}{r!s!t!}  .
\end{gather*}

The {\em $\lambda$-Hurwitz product} on $A^{\mathbb{N}}$ can be def\/ined by the clearly related equation
\begin{gather}\label{LambHurw}
\big(f\cdot^{\lambda}g\big)(n)=\sum_{n=r+s+t}{\binom{n}{r,s,t}\lambda^t f(r+t) g(s+t)}   .
\end{gather}

\begin{Example}
For $\lambda = 0$, $k=\mathbb{R}$ and $A$ the algebra of smooth functions $f\colon \mathbb{R}\to \mathbb{R}$
under pointwise addition and multiplication, the dif\/ferentiation function $d\colon A\to A$ is a 0-weighted
derivation by the classical Leibnitz rule.
\end{Example}

\begin{Example}
For $\lambda$ invertible, $k=\mathbb{R}$ and $A$ the algebra of functions
$f\colon \mathbb{R}\to \mathbb{R}$ under pointwise addition and multiplication, the function
$d\colon A\to A$ def\/ined by
\begin{gather*}
d(f)(x) = \frac{f(x+\lambda)-f(x)}{\lambda}
\end{gather*}
is a $\lambda$-weighted derivation.
\end{Example}

\begin{Example}\label{consecdifference}
Def\/ine $d\colon A^{\mathbb{N}} \to A^{\mathbb{N}}$ by $d(s)(n) = s(n+1) -s(n)$.
This $d$ is a 1-weighted derivation when $A^{\mathbb{N}}$ is
equipped with the pointwise addition and multiplication.
\end{Example}

\begin{Example}\label{diffseq}
Def\/ine $d\colon A^{\mathbb{N}} \to A^{\mathbb{N}}$ by $d(f)(n) = f(n+1)$.
This $d$ is a $\lambda$-weighted derivation when $A^{\mathbb{N}}$ is
equipped with the $\lambda$-Hurwitz product for any $\lambda$.
Notice that we have an algebra morphism $d^*\colon A^{\mathbb{N}} \to (A^{\mathbb{N}})^{\mathbb{N}}$
def\/ined by $d^*(f)(m)(n)= f(m+n)$. This may motivate the next def\/inition.
\end{Example}

Def\/ine $d^*\colon A\to A^{\mathbb{N}}$ by $d^*(a)(n)=d^n(a)$.
We see that the Leibnitz rule \eqref{genLeib} amounts to

\begin{Proposition}
$d^*\colon A\to A^{\mathbb{N}}$ is an algebra morphism for all $\lambda$-weighted derivations
$d$ on~$A$, where $A^{\mathbb{N}}$ has the $\lambda$-Hurwitz product.
\end{Proposition}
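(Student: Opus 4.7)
The plan is to verify the three defining conditions of an algebra morphism---$k$-linearity, preservation of the unit, and preservation of multiplication---by comparing the sequence-level formula for $d^*$ with the formula \eqref{LambHurw} defining the $\lambda$-Hurwitz product. The $k$-linearity is essentially automatic: each iterate $d^n$ is $k$-linear because $d$ is, so the sequence-valued function $a\mapsto (d^n(a))_{n\in\mathbb{N}}$ is $k$-linear.

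For the unit, I first identify the unit $e$ of the $\lambda$-Hurwitz product on $A^{\mathbb{N}}$ by inspection of \eqref{LambHurw}: taking $e(0)=1_A$ and $e(m)=0$ for $m\geq 1$, only the term with $r=t=0$, $s=n$ survives in $(e\cdot^{\lambda}f)(n)$, giving $f(n)$. I then show $d^*(1)=e$. By definition $d^*(1)(0)=1_A$, and for $n\geq 1$ the identity $d(1)=0$ together with an easy induction (applying $d$ to $d^{n-1}(1)=0$) gives $d^*(1)(n)=d^n(1)=0$.

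The main content is multiplicativity. Here one simply computes, for arbitrary $a,b\in A$ and $n\in\mathbb{N}$,
\begin{gather*}
d^*(ab)(n) \;=\; d^n(ab) \;=\; \sum_{n=r+s+t}\binom{n}{r,s,t}\lambda^t d^{r+t}(a)d^{s+t}(b)
\end{gather*}
by the generalized Leibnitz rule \eqref{genLeib}, and compares this with
\begin{gather*}
\big(d^*(a)\cdot^{\lambda}d^*(b)\big)(n) \;=\; \sum_{n=r+s+t}\binom{n}{r,s,t}\lambda^t d^*(a)(r+t)d^*(b)(s+t)
\end{gather*}
obtained directly from \eqref{LambHurw}. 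Substituting $d^*(a)(r+t)=d^{r+t}(a)$ and $d^*(b)(s+t)=d^{s+t}(b)$ makes the two expressions identical.

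The only genuinely nontrivial ingredient is the generalized Leibnitz rule \eqref{genLeib}, which the excerpt takes as given; should one wish to derive it from scratch, the hard part would be an induction on $n$ using the weighted Leibnitz identity $d(ab)=d(a)b+ad(b)+\lambda d(a)d(b)$, reindexing the resulting triple sum to match the trinomial coefficients in the form $(x+y+\lambda xy)^n$. Once \eqref{genLeib} is in hand, the proposition is just a reading-off of definitions.
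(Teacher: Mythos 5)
Your proposal is correct and follows exactly the route the paper intends: the paper's "proof" is the single observation that the generalized Leibnitz rule \eqref{genLeib} is, term for term, the statement that $d^*$ preserves the $\lambda$-Hurwitz product \eqref{LambHurw}, and you supplement this with the routine (and correct) checks of $k$-linearity and unit preservation via $d(1)=0$.
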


In fact, $A\mapsto A^{\mathbb{N}}$ is a comonad
\begin{gather}\label{cmd}
G=\big((-)^{\mathbb{N}},\varepsilon, \delta \big)
\end{gather}
on the category $\mathrm{Alg}_k$
of $k$-algebras whose Eilenberg--Moore-coalgebras are $k$-algebras $A$ equipped with a
$\lambda$-derivation, so-called {\em $\lambda$-derivation algebras};
write $\mathrm{DA}_{\lambda}$ for the category of these.
The morphism $d^*\colon A\to A^{\mathbb{N}}$ is the coaction of the comonad.

Where there is dif\/ferentiation, there should also be integration.
A {\em Rota--Baxter operator of weight $\lambda$} on a $k$-algebra $A$ is
a $k$-linear morphism $P\colon A \to A$ satisfying
\begin{gather*}
P(a)P(b) = P (P(a)b +P(a)b+\lambda ab )   .
\end{gather*}
The pair $(A,P)$ is called a {\em $\lambda$-weighted Rota--Baxter algebra}.
Write $\mathrm{RBA}_{\lambda}$ for the category of these.

\begin{Example}
For $\lambda = 0$, $k=\mathbb{R}$ and $A$ the algebra of continuous functions $f\colon \mathbb{R}\to \mathbb{R}$
under pointwise addition and multiplication, the integration function $P\colon A\to A$,
def\/ined by $P(f)(x) = \int_0^xf(t)\mathrm{d}t$, is a 0-weighted
Rota--Baxter operator by the classical integration-by-parts rule.
\end{Example}

\begin{Example}\label{parsum}
For $\lambda = 1$ and any $k$-algebra $A$, def\/ine $P\colon A^{\mathbb{N}}\to A^{\mathbb{N}}$ to take
a sequence $u$ in $A$ to its sequence $P(u)$ of partial sums
\begin{gather*}
P(u)(n) = \sum_{i=0}^{n-1}{u(i)}   .
\end{gather*}
Then $P$ is a 1-weighted Rota--Baxter operator on $A^{\mathbb{N}}$ with pointwise addition and multiplication. See \cite{Baxt1960, Rota1969}.
For $d$ the consecutive dif\/ference operator as def\/ined in Example~\ref{consecdifference},
notice that $d\circ P=1_{A^{\mathbb{N}}}$.
\end{Example}

\begin{Example}
If $Q$ is a 1-weighted Rota--Baxter operator on $A$ then $P(a) = \lambda Q(a)$ def\/ines
a~$\lambda$-weighted Rota--Baxter operator $P$ on $A$.
\end{Example}

A {\em $\lambda$-weighted derivation RB-algebra} is a $k$-algebra $A$ equipped with
a $\lambda$-weighted deriva\-tion~$d$ and a $\lambda$-weighted Rota--Baxter operator~$P$
such that $d\circ P=1_A$. Write $\mathrm{DRB}_{\lambda}$ for the category of these.

\begin{Proposition}[see \cite{GKZ2008}] \label{lift}
Let $P$ be a RB-operator of weight $\lambda$ on~$A$.
Then $A^{\mathbb{N}}$ equipped with the $\lambda$-Hurwitz product, the derivation~$d$ of
Example~{\rm \ref{diffseq}}, and~$P$ defined by
\begin{gather*}
P(f)(n) =
\begin{cases}
P(f(0)) & \mbox{for } n=0, \\
f(n-1) & \mbox{for } n>0
\end{cases}
\end{gather*}
is a $\lambda$-weighted derivation RB-algebra.
Moreover, the following square commutes
\begin{equation*}
\xymatrix{
A^{\mathbb{N}} \ar[rr]^-{P} \ar[d]_-{\mathrm{ev}_0} && A^{\mathbb{N}} \ar[d]^-{\mathrm{ev}_0} \\
A \ar[rr]_-{P} && A}
\end{equation*}
\end{Proposition}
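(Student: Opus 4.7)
My plan is to check, in order, the three structural requirements of a $\lambda$-weighted derivation RB-algebra and then the commuting square. The surrounding text already hands us the algebra structure on $A^{\mathbb{N}}$ under the $\lambda$-Hurwitz product and the derivation property of the shift~$d$ (Example~\ref{diffseq}), so what remains is (a)~$d\circ P=1_{A^{\mathbb{N}}}$, (b)~the $\lambda$-Rota--Baxter identity for~$P$ on $A^{\mathbb{N}}$, and (c)~commutativity of the displayed square. Parts (a) and (c) are immediate from the definition of $P$: for $n\geq 0$, $(dPf)(n)=(Pf)(n+1)=f(n)$ since $n+1>0$, while $\mathrm{ev}_0(Pf)=P(f)(0)=P(f(0))=P(\mathrm{ev}_0 f)$.

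The substantive task is (b), the identity $P(f)\cdot^{\lambda}P(g)=P\bigl(P(f)\cdot^{\lambda}g+f\cdot^{\lambda}P(g)+\lambda f\cdot^{\lambda}g\bigr)$ evaluated at each $n\in\mathbb{N}$. I would split on $n$. At $n=0$ the Hurwitz product collapses to the pointwise product, and the identity reduces to the Rota--Baxter axiom for the given~$P$ on~$A$ applied to $f(0)$ and $g(0)$. For $n\geq 1$, I would expand the LHS via~\eqref{LambHurw} and peel off the two summands where $r+t=0$ or $s+t=0$; using $P(f)(0)=P(f(0))$, these contribute the boundary terms $P(f(0))g(n-1)$ and $f(n-1)P(g(0))$, leaving a bulk sum~$T$ indexed by $r+s+t=n$ with $r+t\geq 1$ and $s+t\geq 1$, whose summand is $\binom{n}{r,s,t}\lambda^t f(r+t-1)g(s+t-1)$.

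On the RHS, because $(Ph)(n)=h(n-1)$ for $n\geq 1$, each of the three inner Hurwitz products is evaluated at $n-1$. The same two boundary terms reappear immediately, and reindexing $s\mapsto s+1$ in $P(f)\cdot^{\lambda}g$, $r\mapsto r+1$ in $f\cdot^{\lambda}P(g)$, and $t\mapsto t+1$ in $\lambda f\cdot^{\lambda}g$ puts all three remaining sums in the uniform form $\binom{n-1}{\cdots}\lambda^t f(r+t-1)g(s+t-1)$ over $r+s+t=n$. Equality with $T$ reduces to the trinomial Pascal identity
\begin{gather*}
\binom{n-1}{r-1,s,t}+\binom{n-1}{r,s-1,t}+\binom{n-1}{r,s,t-1}=\binom{n}{r,s,t},
\end{gather*}
with the convention that a trinomial coefficient with a negative entry vanishes; this is a one-line consequence of $r+s+t=n$.

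I expect the only genuine obstacle to be bookkeeping of the edge cases. The three reindexed sums threaten to produce spurious contributions at $(r,s,t)=(n,0,0)$ and $(0,n,0)$, precisely the points excluded from~$T$ by $r+t\geq 1$ and $s+t\geq 1$. One must confirm at each such point that either the relevant trinomial coefficient vanishes or the constraint inherited from the original Hurwitz sum (namely $r\geq 1$, $s\geq 1$, or $r+t\geq 1$, $s+t\geq 1$ in the corresponding term) kills the extra contribution. Once this case check is dispatched, Pascal's identity closes the argument; no deeper idea is required.
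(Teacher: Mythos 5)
The paper offers no proof of this proposition; it is quoted from Guo--Keigher \cite{GKZ2008}, so there is no internal argument to compare yours against. Judged on its own, your verification is correct and complete in outline. Parts (a) and (c) are indeed immediate, and your reduction of the $n=0$ case of (b) to the Rota--Baxter axiom on $A$ is right (note the paper's displayed axiom contains a typo, $P(a)b+P(a)b$ for $P(a)b+aP(b)$; you have used the intended form). For $n\ge 1$ your bookkeeping works out: the only lattice points outside the bulk index set $\{r+t\ge 1,\ s+t\ge 1\}$ are $(n,0,0)$ and $(0,n,0)$, and at each of them exactly one of the three reindexed sums has a nonvanishing trinomial coefficient --- namely $\binom{n-1}{n-1,0,0}$ from $f\cdot^{\lambda}P(g)$ at $(n,0,0)$ and $\binom{n-1}{0,n-1,0}$ from $P(f)\cdot^{\lambda}g$ at $(0,n,0)$ --- but in each case that term is precisely the boundary term you already peeled off ($f(n-1)P(g(0))$, resp.\ $P(f(0))g(n-1)$), so the inherited constraint does kill it, as you anticipated. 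On the interior the identity $\binom{n-1}{r-1,s,t}+\binom{n-1}{r,s-1,t}+\binom{n-1}{r,s,t-1}=\frac{(n-1)!}{r!s!t!}(r+s+t)=\binom{n}{r,s,t}$ closes the argument, and since every summand keeps the $f$-value to the left of the $g$-value no commutativity of $A$ is needed. The only ingredients you import --- associativity and unitality of $\cdot^{\lambda}$ and the derivation property of the shift --- are exactly what the surrounding text (Example~\ref{diffseq} and the discussion of the comonad $G$) supplies, so leaning on them is legitimate.
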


With a little more work following Proposition~\ref{lift}, we see that the comonad~$G$~\eqref{cmd}
lifts to $\mathrm{RBA}_{\lambda}$.
In particular, with $\mathrm{V}$ denoting the forgetful functor,
we have a comonad $\bar{G}$ and a~commutative square
\begin{gather*}
\xymatrix{
\mathrm{RBA}_{\lambda} \ar[rr]^-{\bar{G}} \ar[d]_-{\mathrm{V}} && \mathrm{RBA}_{\lambda} \ar[d]^-{\mathrm{V}}  & \\
\mathrm{Alg}_{k} \ar[rr]_-{G} && \mathrm{Alg}_{k} &  }
\end{gather*}

Write $\mathrm{RBA}_{\lambda -}$ for the category of $\lambda$-weighted
Rota--Baxter algebras where we do not insist on the algebras having a~unit.
\begin{Proposition}\label{diamond}
For each $(A,\cdot,P)\in \mathrm{RBA}_{\lambda -}$, there is
an associative binary operation $a\diamond b$ defined on $A$ by
\begin{gather*}
a\diamond b = P(a)\cdot b + a\cdot P(b) + \lambda a \cdot b  .
\end{gather*}
Moreover, $T(A,\cdot, P)=(A,\diamond, P)$ defines an endofunctor
\begin{gather*}
T\colon \ \mathrm{RBA}_{\lambda -} \lra \mathrm{RBA}_{\lambda -}  ,
\end{gather*}
which is $($well-$)$copointed by a natural transformation $\gamma \colon T\Rightarrow 1_{\mathrm{RBA}_{\lambda -}}$
whose component at $(A,\cdot,P)$ is $P\colon (A,\diamond, P) \to (A,\cdot,P)$.
\end{Proposition}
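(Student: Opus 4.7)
The plan is to extract one key identity from the Rota--Baxter condition and let it drive every other assertion. Applying the $k$-linear operator $P$ to the definition of $\diamond$ and using the Rota--Baxter relation gives
\[
P(a \diamond b) = P(P(a) \cdot b + a \cdot P(b) + \lambda a \cdot b) = P(a) \cdot P(b),
\]
so $P$ is a multiplicative map $(A, \diamond) \to (A, \cdot)$. This single identity is the lever for everything that follows.

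For associativity of $\diamond$, I would expand $(a \diamond b) \diamond c$ and $a \diamond (b \diamond c)$ using the above identity to replace each occurrence of $P$ applied to a $\diamond$-product by an ordinary product of $P$-values. Each side unfolds to seven monomials in $a, b, c, P(a), P(b), P(c)$ with coefficients $1, 1, 1, \lambda, \lambda, \lambda, \lambda^2$, and a term-by-term match establishes associativity. For the Rota--Baxter identity of $P$ relative to $\diamond$, the left-hand side $P(a) \diamond P(b)$ unfolds by definition to $P(P(a)) \cdot P(b) + P(a) \cdot P(P(b)) + \lambda P(a) \cdot P(b)$, and the right-hand side $P(P(a) \diamond b + a \diamond P(b) + \lambda a \diamond b)$ collapses term by term via the key identity to the same expression.

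Functoriality of $T$ is immediate: any morphism $f$ in $\mathrm{RBA}_{\lambda -}$ preserves $\cdot$ and $P$, hence preserves $\diamond$ by its definition, and $T$ leaves the underlying map and the operator $P$ alone. Naturality of $\gamma$ amounts to the equation $f \circ P = P \circ f$, which is already part of the definition of a morphism in $\mathrm{RBA}_{\lambda -}$. Finally, the well-copointed condition $\gamma T = T \gamma \colon T^2 \Rightarrow T$ holds because at each object $(A, \cdot, P)$ both components are the same $k$-linear map $P \colon (A, \diamond\diamond, P) \to (A, \diamond, P)$, writing $\diamond\diamond$ for the operation produced by a second application of the construction to $(A, \diamond, P)$.

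The only genuinely non-trivial step is the associativity computation; once the key identity $P(a \diamond b) = P(a) \cdot P(b)$ is in hand, the verification is mechanical, and all remaining claims reduce to checking that the data we already possess satisfy the relevant compatibilities.
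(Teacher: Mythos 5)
Your proof is correct, and it is the standard argument for this classical ``double product'' construction; the paper itself states Proposition~\ref{diamond} without proof, so there is nothing to compare against. Your key identity $P(a\diamond b)=P(a)\cdot P(b)$ (i.e., $P$ is multiplicative from $(A,\diamond)$ to $(A,\cdot)$) is exactly the right lever: it simultaneously gives the Rota--Baxter axiom for $P$ relative to $\diamond$, the fact that $\gamma_{(A,\cdot,P)}=P$ is a morphism of $\mathrm{RBA}_{\lambda-}$, and a clean route through the associativity check, whose seven-monomial expansion with coefficients $1,1,1,\lambda,\lambda,\lambda,\lambda^2$ I have verified matches on both sides. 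One small remark: the paper's displayed Rota--Baxter axiom contains a typo ($P(a)b+P(a)b+\lambda ab$ instead of $P(a)b+aP(b)+\lambda ab$); you have silently used the correct form, which is what the rest of the paper intends.
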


\begin{Remark}\label{Lack}
The day after my seminar talk of 4 February 2015, on the material of this section and
Section~\ref{Ltenspecies}, Stephen Lack made the following comments:
\begin{enumerate}\itemsep=0pt
\item Consider the category $[\Sigma \mathbb{N},\CV]$ whose objects
are pairs $(M,d)$ consisting of an object $M$ of a nice monoidal additive category $\CV$
and an endomorphism
$d\colon M\to M$. The forgetful functor
\begin{gather}\label{undder}
\mathrm{U}\colon \ [\Sigma \mathbb{N},\CV] \lra \CV   ,
\end{gather}
taking $(M,d)$ to $M$, has a right adjoint taking $M$ to $(M^{\mathbb{N}},d)$ where
$d(f)(n) = f(n+1)$.
There is a monoidal structure on $[\Sigma \mathbb{N},\CV]$ def\/ined by
\begin{gather*}
(M,d)\otimes^{\lambda}(N,d) = (M\otimes N, d\otimes 1+1\otimes d + \lambda d\otimes d)   .
\end{gather*}
The monoids in this monoidal category are precisely $\lambda$-derivation
algebras.
Moreover, $\mathrm{U}$~\eqref{undder}~and its right adjoint form a~monoidal adjunction
which therefore def\/ines an adjunction between the categories of monoids.
This adjunction generates the comonad~$G$~\eqref{cmd} on the category $\operatorname{Mon}\CV$
of monoids in~$\CV$.

\item There is a bialgebra
structure on the polynomial algebra $k[x]$ with comultiplication the algebra
morphism $\delta \colon k[x] \to k[x,y]\cong k[x]\otimes k[x]$ def\/ined by
\begin{gather*}
\delta(x) = x + y +\lambda xy   .
\end{gather*}
Then the convolution product on the left-hand side of the canonical isomorphism
\begin{gather*}
\mathrm{Mod}_k(k[x],A)\cong A^{\mathbb{N}}
\end{gather*}
transports to the $\lambda$-Hurwitz product on $A^{\mathbb{N}}$.

\item It feels like there should be a multicategory/promonoidal/substitude structure on $[\Sigma \mathbb{N},\CV]$ for dealing with RB-algebras.
\end{enumerate}
\end{Remark}

\section{Graphs in monoidal additive categories}\label{gimac}

Let $\CV$ be a monoidal additive category.
We act as if the monoidal structure were strict.

Let $\operatorname{Gph}\CV$ be the category of directed graphs in $\CV$.
So an object has the form of a pair of parallel morphisms $s, t \colon E \lra A$
in~$\CV$; we use $s$ and $t$ for source and target morphisms in all graphs.
A~morphism $(f,\phi) \colon (A,E) \lra (B,F)$ in $\operatorname{Gph}\CV$ consists of
morphisms $f$ and $\phi$ making the following diagram commute
\begin{gather*}
\xymatrix{
A  \ar[d]_-{f} & E \ar[l]_-{s} \ar[r]^-{t} \ar[d]^-{\phi} & A \ar[d]^-{f} \\
B & F \ar[l]^-{s} \ar[r]_-{t} & B}
\end{gather*}
Write $\mathrm{ver} \colon \operatorname{Gph}\CV \!\lra\! \CV$ for the forgetful functor
taking $(A,E)$ to $A$ and write \mbox{$\mathrm{edg} \colon \operatorname{Gph}\CV \!\lra\! \CV$}
for the forgetful functor taking~$(A,E)$ to~$E$.

We will use the notation $\langle n \rangle = \{ 1, 2, \dots, n\}$.
For $R\subseteq \langle n \rangle$, write
\begin{gather*}
\chi_R \colon \ \langle n \rangle \lra \{s,t\}
\end{gather*}
for the characteristic function of $R$ def\/ined by
\begin{gather*}
\chi_R(i) =
\begin{cases}
s & \mbox{for } i\in R, \\
t & \mbox{for } i \notin R   .
\end{cases}
\end{gather*}

Choose an endomorphism $\lambda \colon I \to I$ of the tensor unit $I$ in $\CV$.
For any $f \colon A \to B$ in $\CV$, we def\/ine
$(\lambda f \colon A \to B) = (\lambda \otimes f \colon I\otimes A \to I\otimes B)$.

Given a list $(A_1,E_1),\dots, (A_n,E_n)$ of objects of $\operatorname{Gph}\CV$,
we def\/ine an $n$-fold tensor product
\begin{gather}\label{gphtensor}
{\otimes}_{1\le i \le n}^{\lambda}{(A_i,E_i)} =  \big({\otimes}_{1\le i \le n}{A_i}, {\otimes}_{1\le i \le n}{E_i} \big)   ,
\end{gather}
where
\begin{gather*}
s = \sum_{\varnothing \neq R\subseteq \langle n \rangle}{\lambda^{(\#R-1)}\chi_R(1)\otimes \dots \otimes \chi_R(n)} \qquad \text{and} \qquad t = t \otimes \dots \otimes t   .
\end{gather*}

 For $n=2$ this gives a binary tensor product
 \begin{gather*}
(A,E){\otimes}^{\lambda} (B,F) = (A\otimes B, E\otimes F)
\end{gather*}
with
\begin{gather*}
s = \lambda s\otimes s + s\otimes t + t\otimes s \qquad \text{and} \qquad t = t \otimes t   .
\end{gather*}
The unit for this tensor is the graph $(I,I)$ with $s = 0 \colon I\to I$ and $t = 1_I\colon I \to I$.

 \begin{Proposition}\label{monstrgph}
 A monoidal structure on $\operatorname{Gph}\CV$
 is defined by \eqref{gphtensor} for any given $\lambda \in \CV(I,I)$.
 Both $\mathrm{ver}$  and $\mathrm{edg} \colon \operatorname{Gph}\CV \lra \CV$
 are strict monoidal.
 \end{Proposition}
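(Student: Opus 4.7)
The plan is to verify functoriality of the binary tensor, strict associativity and unitality of $\otimes^\lambda$, and strict monoidality of the two forgetful functors. Functoriality is routine: given morphisms $(f_i,\phi_i)$, the pair $(f_1\otimes f_2,\phi_1\otimes\phi_2)$ intertwines the new targets because $t$ distributes over $\otimes$, and intertwines the new sources term-by-term after expanding $\lambda s\otimes s+s\otimes t+t\otimes s$. Strict monoidality of $\mathrm{ver}$ and $\mathrm{edg}$ is immediate because by construction the vertex and edge objects of the tensor product are the underlying tensor products in $\CV$.

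The substantive point is strict associativity. Since \eqref{gphtensor} is specified a priori for every $n$, the clean strategy is to prove the partition identity: for any split $\langle n\rangle=\langle k\rangle\sqcup\{k+1,\dots,n\}$, the $n$-fold tensor equals the binary tensor of the two block-tensors; associativity then follows by applying this to the two bracketings of three factors. On vertices and on the target $t^{\otimes n}$ the identity is just strict associativity in $\CV$. On the source, a nonempty $R\subseteq\langle n\rangle$ decomposes as $R=R_1\sqcup R_2$ with $R_1\subseteq\langle k\rangle$ and $R_2\subseteq\{k+1,\dots,n\}$; the sum $\sum_{R\ne\varnothing}\lambda^{\#R-1}\chi_R$ then splits into three mutually exclusive cases ($R_1,R_2$ both nonempty; only $R_1$ nonempty; only $R_2$ nonempty), which reproduce respectively the three terms $\lambda s_L\otimes s_R$, $s_L\otimes t_R$, $t_L\otimes s_R$ of the binary formula applied to the block-tensors.

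A useful repackaging is the change of variables $\sigma_i=\lambda s_i+t_i$: combining source and target of the $n$-fold tensor gives
\[
\lambda s+t \;=\; \sum_{R\subseteq\langle n\rangle}\lambda^{\#R}\chi_R(1)\otimes\cdots\otimes\chi_R(n) \;=\; \bigotimes_{i=1}^n(\lambda s_i+t_i),
\]
so in the $(\sigma,t)$ presentation the $n$-fold tensor is simply componentwise. This makes the partition identity transparent and handles the unit laws immediately: the unit $(I,I)$ has $\sigma=\lambda\cdot 0+1_I=1_I=t$, acting as identity under $\otimes$ in $\CV$. I expect no real obstacle beyond combinatorial bookkeeping; the only subtlety worth flagging is that for non-invertible $\lambda$ the $\sigma$-reformulation alone does not recover $s$, so the formal justification of associativity is best completed by the explicit partition computation on $s$ itself, with the $\sigma$-picture serving as the guiding heuristic.
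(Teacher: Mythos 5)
Your proof is correct and follows essentially the same route as the paper: both arguments establish associativity by checking that the source (and, trivially, target) morphisms of either bracketing coincide with those of the a priori $n$-fold tensor \eqref{gphtensor}, so that the associativity constraints of $\CV$ lift through $\mathrm{ver}$ and $\mathrm{edg}$ and coherence is inherited. Your block-partition identity for a general split of $\langle n\rangle$ and the $\lambda s+t=\bigotimes_i(\lambda s_i+t_i)$ repackaging (correctly flagged as only heuristic when $\lambda$ is not invertible) are mild refinements of the paper's ``easy calculation'' for the triple tensor, not a different method.
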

 \begin{proof} Easy calculations of the source morphisms for
 \begin{gather*}
\big((A,E){\otimes}^{\lambda} (B,F)\big){\otimes}^{\lambda} (C,G) \qquad \text{and} \qquad  (A,E){\otimes}^{\lambda} \big((B,F){\otimes}^{\lambda} (C,G)\big)
\end{gather*}
show they agree with that of the triple tensor product.
 The target morphisms obviously agree.
 What this means is that the associativity constraints for~$\CV$ lift through
 $\mathrm{ver}$ and $\mathrm{edg}$ to $\operatorname{Gph}\CV$ and are therefore coherent.
 \end{proof}

 Let $[\Sigma \mathbb{N},\CV]$ denote the category whose objects $(A, e\colon A\to A)$ consist of
 an object~$A$ of~$\CV$ equipped with an endomorphism~$e$.
 Let
 \begin{gather}\label{jay}
J\colon \ [\Sigma \mathbb{N},\CV] \lra \operatorname{Gph}\CV
\end{gather}
be the functor def\/ined by $J(A, e) = (A,A)$ with $s = e$ and $t = 1_A$;
and $Jf=(f,f)$.
Notice also that a morphism $(f,\phi) \colon (B,F) \to J(A,e)$
 in $\operatorname{Gph}\CV$ with codomain in the subcategory
 amounts to a commutative diagram
\begin{gather*}
\xymatrix{
F \ar[r]^-{t} \ar[d]_-{s} & B \ar[r]^-{f}  & A \ar[d]^-{e} \\
B \ar[rr]_-{f} & & A }
\end{gather*}
where $\phi$ is forced to be $f\circ t \colon F \to A$.
Clearly $J$ is fully faithful and the monoidal structure of Proposition~\ref{monstrgph}
restricts to a monoidal structure on $[\Sigma \mathbb{N},\CV]$ yielding~\eqref{jay}
as a strict monoidal functor.
Indeed, this is none other than the monoidal structure of Remark~\ref{Lack}, item~1.

\begin{Definition}\label{wdermon}
A {\em $\lambda$-weighted derivational monoid} in $\CV$ is a monoid $(A,d)$
in $[\Sigma \mathbb{N},\CV]$ equipped with the monoidal structure obtained as the
restriction through \eqref{jay} of that of
Proposition~\ref{monstrgph} on $\operatorname{Gph}\CV$.
An object $(A,d) \in [\Sigma \mathbb{N},\CV]$ with an associative binary operation
will be called a~{\em $\lambda$-weighted derivational semigroup}.
\end{Definition}

More explicitly, a $\lambda$-weighted derivational monoid is a monoid $A$ in $\CV$
equipped with an endomorphism $d\colon A \to A$ satisfying the $\lambda$-weighted
equation
\begin{gather*}
d \circ \mu =  \mu \circ (\lambda d\otimes d + d\otimes 1 + 1\otimes d)  ,
\end{gather*}
and the equation $d\circ \eta = 0$ (where $\eta \colon I\to A$ is the unit of~$A$).

There is an isomorphism of categories
\begin{gather*}
\mathrm{op} \colon \ \operatorname{Gph}\CV \lra \operatorname{Gph}\CV
\end{gather*}
taking $(A,E)$ to $(A,E)^{\mathrm{op}}$ for which~$A$ and~$E$ are unchanged
but $s$ and $t$ have been interchanged.

Put
\begin{gather*}
J^{\mathrm{op}} = \big([\Sigma \mathbb{N},\CV] \stackrel{J}\lra \operatorname{Gph}\CV \stackrel{\mathrm{op}}\lra \operatorname{Gph}\CV \big)   .
\end{gather*}
Like $J$, this composite $J^{\mathrm{op}}$ is fully faithful.

However, the image of $J^{\mathrm{op}}$ is \textit{not}
closed under the monoidal structure of Proposition~\ref{monstrgph}.
All we obtain on $[\Sigma \mathbb{N},\CV]$ is a structure of multicategory
(sometimes called a ``coloured operad'').
The sets of multimorphisms are def\/ined by
\begin{gather}\label{multimorph}
\mathrm{P}_{\lambda} ((A_1,p_1),\dots,(A_n,p_n); (B,p) )
= \operatorname{Gph}\CV\big(\otimes^{\lambda}_{1\le i \le n}{J^{\mathrm{op}}(A_i,p_i)}, J^{\mathrm{op}}(B,p)\big).
\end{gather}
To be more explicit, for $R\subseteq \langle n \rangle$ and $i\in \langle n \rangle$,
put
\begin{gather*}
R(i) =
\begin{cases}
1_{A_i} & \mbox{for } i\in R, \\
p_i & \mbox{for } i \notin R  .
\end{cases}
\end{gather*}
Then, an element of the set \eqref{multimorph}, a {\em multimorphism}, is a morphism
\begin{gather*}
f \colon \ A_1\otimes \dots \otimes A_n \lra B
\end{gather*}
satisfying the equation
\begin{gather*}
f \circ (p_1\otimes \dots \otimes p_n) = p\circ f \circ \sum_{\varnothing \neq R\subseteq \langle n \rangle}{\lambda^{(\#R-1)}R(1)\otimes \dots \otimes R(n)}   .
\end{gather*}

This is a case of a general process of obtaining a multicategory structure on a category
by restriction along a functor into a monoidal category.
The notion of monoid makes sense in any multicategory.
We have the following special case.

\begin{Definition}\label{wRBmon}
A {\em $\lambda$-weighted Rota--Baxter monoid} in $\CV$ is an object $(A,p)$
of $[\Sigma \mathbb{N},\CV]$ (that is, $p\colon A \to A$ in $\CV$) equipped with the
structure of semigroup on $J^{\mathrm{op}}(A,p)$ in the monoidal category
$\operatorname{Gph}\CV$ of Proposition~\ref{monstrgph}, and a unit $\eta \colon I \to A$
for the underlying semigroup $A$ in $\CV$.
\end{Definition}

This def\/inition should make the calculation of free weighted Rota--Baxter monoids possible;
compare \cite{AM2006, Cart1972,E-FG2008, Rota1969}.

To make Def\/inition~\ref{wRBmon} a little more explicit, as expected, a $\lambda$-weighted Rota--Baxter monoid $(A,p)$ is a monoid~$A$ in~$\CV$ equipped with an endomorphism $p\colon A \to A$ satisfying 
\begin{gather*}
\mu \circ (p\otimes p) = p \circ \mu \circ (\lambda 1\otimes 1 + 1\otimes p + p\otimes 1)   .
\end{gather*}

Derivations and Rota--Baxter operators are not the only sources of semigroups and monoids for the monoidal structure of Proposition~\ref{monstrgph}. The forgetful functor
\begin{gather*}
\mathrm{ve} \colon \ \operatorname{Gph}\CV \lra \CV \times \CV   ,
\end{gather*}
taking the graph $(A,E)$ to the pair $(A,E)$, is strict monoidal
and has a right adjoint $\mathrm{R}$ def\/ined by
\begin{gather*}
\mathrm{R}(X,Y) = (X, X\oplus X\oplus Y)
\end{gather*}
with $s = \mathrm{pr}_1$ (the f\/irst projection) and $t = \mathrm{pr}_2$
(the second projection). It follows that~$\mathrm{R}$ is monoidal
and hence takes monoids to monoids.

\begin{Example}
Take $\CV = \mathrm{Mod}_k$, the category of modules over a commutative
ring $k$.
For a graph $(A,E)$ in this $\CV$, we can write $e\colon a\to b$ to mean
$a,b\in A$, $e\in E$ with $s(e)=a, t(e)=b$.
For $k$-algebras $A$ and $B$, we obtain a monoid $\mathrm{R}(A,B)$
in $\operatorname{Gph}\CV$: the graph is
$\mathrm{pr}_1, \mathrm{pr}_2\colon A\oplus A \oplus B \to A$ and the multiplication
is def\/ined by:
\begin{gather*}
\left((a_1,a_2,b) \colon a_1 \to a_2\right) \cdot \left((c_1,c_2,d)\colon c_1\to c_2\right)
=
(\lambda a_1c_1 + a_1c_2 + a_2c_1, a_2c_2,bd)\colon a_1c_1 \to a_2c_2.
\end{gather*}
\end{Example}

Of course the $\CV$-functor $J$ \eqref{jay} has both adjoints if $\CV$ is complete
and cocomplete enough. In particular, the right adjoint
\begin{gather*}
K \colon \ \operatorname{Gph}\CV \lra [\Sigma \mathbb{N},\CV]
\end{gather*}
is def\/ined by taking $K(A,E)$ to be the equalizer of the two morphisms
\begin{gather*}
s^{\mathbb{N}}, t^{\mathrm{succ}} \colon \ E^{\mathbb{N}}\to A^{\mathbb{N}}
\end{gather*}
equipped with the endomorphism $e\colon K(A,E) \to K(A,E)$ induced by $E^{\mathrm{succ}}$.
Here $\mathrm{succ} \colon \mathbb{N} \to \mathbb{N}$ is the successor function $n\mapsto n+1$.
Since $J$ \eqref{jay} is strong monoidal for the monoidal structures under discussion,
the adjunction $J\dashv K$ is monoidal.
So $K$ takes semigroups to semigroups and monoids to monoids.

In particular, if $(A,p)$ is a $\lambda$-weighted Rota--Baxter
monoid in $\CV$, then $K$ takes the graph $(A,A)$ with $s=1_A$ and $t=p$ to a
$\lambda$-weighted derivational semigroup in~$\CV$.
The underlying object is the limit of the diagram
\begin{gather*}
A \stackrel{p} \lla A \stackrel{p} \lla A \stackrel{p} \lla \cdots
\end{gather*}
in $\CV$.

\begin{Example}
Taking $\CV = \mathrm{Vect}_k$ and a $\lambda$-weighted Rota--Baxter $k$-algebra
$p\colon A\to A$, we have the non-unital $\lambda$-weighted derivational $k$-algebra
\begin{gather*}
K(J(A,p)^{\mathrm{op}}) = \big\{a\in A^{\mathbb{N}} \, | \, p(a_{n+1}) = a_n \big\}
\end{gather*}
with $d(a)_n = a_{n+1}$.
The multiplication on $K(J(A,p)^{\mathrm{op}})$ is the restriction of the $\lambda$-weighted
Hurwitz multiplication on $A^{\mathbb{N}}$ arising from the non-unital algebra~$(A,\diamond)$
of Proposition~\ref{diamond}.
Moreover, $K(J(A,p)^{\mathrm{op}})$ supports a
$\lambda$-weighted Rota--Baxter operator $p$ def\/ined by $p(a)_n = p(a_n)$.
Notice too that $d\circ p = 1$.
\end{Example}

We conclude this section by describing
the promonoidal structure in the sense of Day \cite{DayConv}
with respect to which the monoidal structure of
Proposition~\ref{monstrgph} is convolution.

Let $\mathbb{G}$ denote the category whose only objects are $0$ and $1$,
with the only non-identity morphisms $\sigma , \tau \colon 1 \to 0$.
Write $I_*\mathbb{G}$ for the free $\CV$-category on~$\CV$.
Then $\operatorname{Gph}\CV = [\mathbb{G}, \CV] = [I_*\mathbb{G}, \CV]$
where the f\/irst set of square brackets means the ordinary functor category
while the second means the $\CV$-enriched functor category.
The promonoidal structure in question is technically on~$I_*\mathbb{G}$
in the $\CV$-enriched sense. However, we can look at it as consisting of
an ordinary a functor
\begin{gather*}
\mathrm{P} \colon \ \mathbb{G}^{\mathrm{op}} \times \mathbb{G}^{\mathrm{op}} \lra \operatorname{Gph}\CV
\end{gather*}
and an object $\mathrm{J} \in \operatorname{Gph}\CV$.
Of course $\mathrm{J}$ is just the graph $0, 1 \colon I \to I$ which is the tensor unit.
We can regard $\mathrm{P}$ as a ``cograph of cographs of graphs'' (although a cograph
looks just like a graph):
\begin{gather*}
\xymatrix{
I \ar@<-1ex>[rr]_-{(0,1)} \ar@<1ex>[rr]^-{(1,0)} \ar@<-1ex>[dd]_{(1,0)} \ar@<1ex>[dd]^{(0,1)} && I\oplus I=2\cdot I \ar@<-1ex>[dd]_{\bigl(\begin{smallmatrix}
1&0&0&0\\
0&0&1&0
\end{smallmatrix} \bigr)} \ar@<1ex>[dd]^{\bigl(\begin{smallmatrix}
0&1&0&0\\
0&0&0&1
\end{smallmatrix} \bigr)} \\
\\
2\cdot I=I\oplus I \ar@<-1ex>[rr]_-{\bigl(\begin{smallmatrix}
0&0&1&0\\
0&0&0&1
\end{smallmatrix} \bigr)} \ar@<1ex>[rr]^-{\bigl(\begin{smallmatrix}
1&0&0&0\\
0&1&0&0
\end{smallmatrix} \bigr)} && \left((\lambda,1,1,0), (0,0,0,1) \colon I \to 4\cdot I\right) }
\end{gather*}

\section[The $L$-Hurwitz product of species]{The $\boldsymbol{L}$-Hurwitz product of species}\label{Ltenspecies}

Let $\mathfrak{S}$ denote the groupoid whose objects are f\/inite sets
and whose morphisms are bijective functions.
We write $U+V$ for the disjoint union of sets~$U$ and~$V$;
this is the binary coproduct as objects of the category~$\mathrm{Set}$
of sets and all functions.
It is not the coproduct in~$\mathfrak{S}$; yet it does provide the
symmetric monoidal structure on $\mathfrak{S}$ of interest here.
When we write $X=A+B$ for~$A$ and~$B$ subsets of a set~$X$, we mean $X=A\cup B$ and $\varnothing = A\cap B$.

We have the particular f\/inite sets $\langle n \rangle = \{ 1, 2, \dots, n\}$.

Let $\CV$ denote a monoidal category with f\/inite coproducts which are preserved by tensoring on either side by an object. The tensor product of $V,W\in \CV$ is denoted by $V\otimes W$ and the unit object by $I$. Justif\/ied by coherence theorems (see~\cite{BTC} for example), we write as if the monoidal structure on $\CV$ were strictly associative and strictly unital.
For any set~$S$, write $S\cdot V$ for the coproduct of~$S$ copies of $V\in \CV$, when it exists
(as it does for~$S$ f\/inite).

The {\em category of $\CV$-valued Joyal species},  after \cite{Species, AnalFunct}, is the functor category
$[\mathfrak{S},\CV]$. The objects will simply be called {\em species} when
$\CV$ is understood.

Suppose $L\colon \mathfrak{S} \to \mathcal{Z}\CV$ is a braided strong monoidal functor into the
monoidal centre (in the sense of \cite{TYBO}) of $\CV$.
We have natural isomorphisms
\begin{gather}\label{u}
u_{X,V} \colon \  LX\otimes V\cong V\otimes LX,
\end{gather}
such that
\begin{gather*}
\xymatrix{
LX\otimes V\otimes W \ar[rd]_{u_{X,V\otimes W}} \ar[rr]^{u_{X,V}\otimes 1_W}   && V\otimes LX\otimes W \ar[ld]^{ \ 1_V\otimes u_{X,W}} \\
& V\otimes W\otimes LX  &
}
\end{gather*}
If $\CV$ itself is braided ({\em a fortiori} symmetric), we can take a braided strong monoidal functor
$\mathfrak{S} \to \CV$
and compose it with the canonical braided strong monoidal functor $\CV \to \mathcal{Z}\CV$
to obtain such an $L$.

By way of example of an $L\colon \mathfrak{S} \to \mathcal{Z}\CV$, we could take any f\/inite set
$\Lambda$ and $LX = \Lambda^X \cdot I$ with $L\sigma = \Lambda^{\sigma^{-1}}\cdot I$
for any bijective function $\sigma$.

\begin{Definition} The {\em $L$-Hurwitz product} $F\otimes^LG$ of species $F$ and $G$
is def\/ined on objects $X\in \mathfrak{S}$ by
\begin{gather}\label{Lten}
\big(F\otimes^LG\big)X = \sum_{X=U\cup V}{L(U\cap V) \otimes FU\otimes GV}  .
\end{gather}
The def\/inition of $F\otimes^LG$ on morphisms is clear since any
bijective function $\sigma \colon X\to Y$ restricts to bijections
\begin{gather*}
U\to \sigma U, \qquad V\to \sigma V, \qquad U\cup V \to \sigma U\cup \sigma V, \qquad U\cap V \to \sigma U\cap \sigma V .
\end{gather*}
\end{Definition}

Let $J\colon \mathfrak{S} \to \CV$ be the species whose value at $X$ is the unit $I$ for tensor in~$\CV$
when $X$ is empty and is initial in~$\CV$ otherwise.
Clearly $J$ is a unit for the $L$-Hurwitz product in the sense that we have canonical isomorphisms
\begin{gather*}
\lambda_G\colon \ J\otimes^LG \to G \qquad \text{and} \qquad \rho_F \colon \ F \to F\otimes^LJ  .
\end{gather*}
Associativity isomorphisms
\begin{gather}\label{Lassoc}
\alpha_{F,G,H} \colon \ \big(F\otimes^LG\big)\otimes^LH \cong F\otimes^L\big(G\otimes^LH\big)
\end{gather}
are obtained using the following result easily proved by Venn diagrams.

\begin{Lemma}\label{Venn}
$(U\cup V)\cap W + U\cap V \cong U\cap (V\cup W) + V\cap W$.
\end{Lemma}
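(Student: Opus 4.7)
The plan is to prove the lemma by cutting each side into the atoms of the three-set Venn diagram on $U,V,W$ and then exhibiting a natural bijection between the resulting multisets of atoms. Since both $+$ signs in the statement denote disjoint union and both sides are built out of the operations $\cup$ and $\cap$, naturality in bijections of the ambient set $X$ will be automatic once the bijection is constructed atom-by-atom.

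Concretely, I would introduce the four relevant atoms
\begin{gather*}
A = U\cap V \setminus W, \qquad B = U\cap W \setminus V, \qquad C = V\cap W \setminus U, \qquad D = U\cap V\cap W.
\end{gather*}
A short computation gives $(U\cup V)\cap W = B + C + D$ and $U\cap V = A + D$, so the left-hand side of the lemma decomposes, as a disjoint union, into one copy of each of $A$, $B$, $C$ and \emph{two} copies of $D$ (one inherited from $(U\cup V)\cap W$ and one from $U\cap V$). Dually, $U\cap(V\cup W) = A + B + D$ and $V\cap W = C + D$, so the right-hand side decomposes into one copy of each of $A$, $B$, $C$ and two copies of $D$ as well.

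The required isomorphism is then obtained by matching pieces: the copy of $A$ sitting inside $U\cap V$ on the left is sent to the copy of $A$ sitting inside $U\cap(V\cup W)$ on the right; the copies of $B$ and $C$ on both sides are matched in the obvious way; and the two copies of $D$ on the left are sent to the two copies of $D$ on the right by any fixed choice (say, the copy of $D$ inside $(U\cup V)\cap W$ going to the copy inside $V\cap W$, and the copy of $D$ inside $U\cap V$ going to the copy inside $U\cap(V\cup W)$). Each of these matchings is given by the identity on underlying elements, so naturality under an arbitrary bijection $\sigma\colon X\to Y$ of $\mathfrak S$ is immediate from the fact that $\sigma$ commutes with $\cup$, $\cap$ and complements, hence preserves the atoms $A,B,C,D$.

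The only thing to be slightly careful with is the bookkeeping of the two copies of $D$ and making sure that the chosen matching is used consistently when the lemma is applied to build the associativity isomorphism \eqref{Lassoc}; I do not expect any genuine obstacle, since everything reduces to the identity on elements. A cross-check by cardinalities---both sides have size $|U\cap W|+|V\cap W|+|U\cap V|-|U\cap V\cap W|$ by inclusion--exclusion---confirms that no atom has been miscounted.
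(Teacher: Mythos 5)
Your proof is correct and is essentially the paper's own argument: the paper simply says the lemma is ``easily proved by Venn diagrams'', and your decomposition into the atoms $U\cap V\setminus W$, $U\cap W\setminus V$, $V\cap W\setminus U$ and $U\cap V\cap W$, with one copy of each of the first three and two copies of the last on each side, is exactly what that Venn-diagram proof amounts to. Your closing caveat about fixing the matching of the two copies of $U\cap V\cap W$ consistently is well taken, since $L$ applied to the swap of those copies is a (generally nontrivial) braiding; the paper addresses this downstream via the $n$-fold tensor formula and coherence of the braided strong monoidal functor $L$.
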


Then, to def\/ine \eqref{Lassoc}, we use the isomorphisms
\begin{gather*}
 L((U\cup V)\cap W)\otimes L(U\cap V)\otimes FU\otimes GV\otimes HW  \\
\qquad{} \cong   L((U\cup V)\cap W+U\cap V)\otimes FU\otimes GV\otimes HW \\
\qquad{} \cong   L(U\cap (V\cup W) + V\cap W)\otimes FU\otimes GV\otimes HW \\
\qquad{} \cong   L(U\cap (V\cup W)) \otimes L(V\cap W)\otimes FU\otimes GV\otimes HW \\
\qquad{} \cong   L(U\cap (V\cup W)) \otimes FU\otimes  L(V\cap W)\otimes GV\otimes HW   ,
\end{gather*}
the f\/irst and third coming from the strong monoidal structure on $L$, the second from
Lemma~\ref{Venn}, and the fourth from the monoidal centre structure~\eqref{u} on $L(V\cap W)$.

In the case $L=J$, we recover from~\eqref{Lten} the usual convolution (Cauchy) product of species
appearing in~\cite{Species}.
In the case where $L$ is the exponential series $LX=I$ for all~$X$, we recover the Heisenberg
product appearing in \cite{AFM2015, MoreiraPhD}.

For a general $L$, the term $L(U\cap V)$ can be
considered a measure of the failure of $U$ and $V$ to be disjoint.

\section{A combinatorial interpretation}\label{aci}

We consider the case where $\CV = \mathrm{Set}$ so that $[\mathfrak{S},\mathrm{Set}]$
is the category of species as studied in~\cite{Species}.
Fix any set $\Lambda$. Def\/ine the species~$L$ by
\begin{gather*}
LX = \biggl\{ S=(S_{\lambda})_{\lambda \in \Lambda} \, | \, S_{\lambda}\subseteq X , \, \sum_{\lambda \in \Lambda}{S_{\lambda}} = X \biggr\}
\end{gather*}
and $(L\sigma )S=(\sigma S_{\lambda})_{\lambda \in \Lambda}$.
In other words, a structure of the species $L$ on the set $X$ is a partition of $X$ into a
$\Lambda$-indexed family of disjoint (possibly empty) subsets.

A structure of the species $F\otimes^LG$ on the set consists of a quintuplet $(U,V,S,\phi,\gamma)$
where~$U$,~$V$ are subsets of $X$ such that $X=U\cup V$, and~$S$, $\phi$, $\gamma$ are
$L$-, $F$-, $G$-structures on $U\cap V$, $U$, $V$, respectively.

We write $\#S$ for the cardinality of the set $S$. We assume $\Lambda$
is f\/inite and put $\lambda = \# \Lambda$.

The {\em cardinality sequence} of a species $F$ is the sequence $\#F \colon \mathbb{N} \to \mathbb{Z}$
def\/ined by
\begin{gather*}
(\#F)(n) = \#F\langle n \rangle   .
\end{gather*}

We consider the $\lambda$-Hurwitz product \eqref{LambHurw} on $\mathbb{Z}^{\mathbb{N}}$.

\begin{Proposition}\label{card}
$\# (F\otimes^LG)= \# F\cdot^{\lambda} \#G$.
\end{Proposition}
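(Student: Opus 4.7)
The plan is a direct counting argument reducing the definition of $F\otimes^L G$ to the trinomial form of the $\lambda$-Hurwitz product. Since we are working in $\CV=\mathrm{Set}$, the coproduct in \eqref{Lten} is disjoint union and the tensor is cartesian product, so
\begin{gather*}
\#\big(F\otimes^LG\big)(n) = \sum_{\langle n\rangle = U\cup V} \#L(U\cap V)\cdot \#FU \cdot \#GV .
\end{gather*}
First I would reindex the sum by the triple of cardinalities $(r,s,t) = (\#(U\setminus V),\#(V\setminus U),\#(U\cap V))$. Since the condition $\langle n\rangle = U\cup V$ says precisely that $\langle n\rangle$ is partitioned into the three disjoint pieces $U\setminus V$, $V\setminus U$, $U\cap V$, the number of pairs $(U,V)$ realising a given triple $(r,s,t)$ with $r+s+t=n$ is the multinomial coefficient $\binom{n}{r,s,t}$.

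Next I would evaluate each factor. Because $F$ and $G$ are species, $\#FU$ depends only on $\#U=r+t$ and $\#GV$ only on $\#V=s+t$, so these factors become $\#F(r+t)$ and $\#G(s+t)$. The only remaining piece is $\#L(U\cap V)$: by the definition of $L$ in the previous section, an $L$-structure on a $t$-element set is a $\Lambda$-indexed partition, and assigning each of the $t$ elements independently to one of the $\lambda = \#\Lambda$ blocks gives $\#L(U\cap V) = \lambda^t$.

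Substituting and summing yields
\begin{gather*}
\#\big(F\otimes^LG\big)(n) = \sum_{r+s+t=n} \binom{n}{r,s,t}\lambda^t\, \#F(r+t)\, \#G(s+t),
\end{gather*}
which is exactly \eqref{LambHurw} applied to $\#F$ and $\#G$. There is no real obstacle here; the only point requiring care is the bijection between decompositions $\langle n\rangle = U\cup V$ and ordered partitions of $\langle n\rangle$ into three blocks of sizes $r,s,t$, which is immediate from the conventions established at the start of Section~\ref{Ltenspecies}.
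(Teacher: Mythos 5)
Your proof is correct and is essentially the argument the paper intends (the paper states Proposition~\ref{card} without proof): your reindexing of covers $\langle n\rangle=U\cup V$ by ordered partitions into $U\setminus V$, $V\setminus U$, $U\cap V$ of sizes $r,s,t$ is exactly the bijection of Proposition~\ref{altLtensor}, and the counts $\#L(U\cap V)=\lambda^t$ and $\#FU=\#F(r+t)$ are as you say. No gaps.
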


This result specializes to Theorem~2.7 of \cite{AFM2015} when~$L$ is the exponential species
and $\lambda = 1$.

\section{The iterated tensor and coherence}\label{titac}

\begin{Proposition}\label{altLtensor}
An alternative definition of $F\otimes^LG$ is
\begin{gather*}
\big(F\otimes^LG\big)X = \sum_{X=A+B+C}{L(C) \otimes F(A+C)\otimes G(B+C)}   .
\end{gather*}
\end{Proposition}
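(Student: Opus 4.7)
The plan is to exhibit a natural bijection between the indexing sets of the two sums and observe that the summands match term-by-term.

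First, I would establish the bijection between decompositions. Given an ordered pair $(U,V)$ of subsets of $X$ with $X=U\cup V$, set
\begin{gather*}
A := U\setminus V, \qquad B := V\setminus U, \qquad C := U\cap V.
\end{gather*}
Then $A$, $B$, $C$ are pairwise disjoint and $X = A+B+C$. Conversely, given a decomposition $X=A+B+C$ into pairwise disjoint subsets, set $U := A+C$ and $V := B+C$. Then $U\cup V = A+B+C = X$ and $U\cap V = C$. These two assignments are mutually inverse.

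Second, I would check that under this correspondence the summands agree:
\begin{gather*}
L(U\cap V)\otimes FU\otimes GV = L(C)\otimes F(A+C)\otimes G(B+C).
\end{gather*}
Summing over the indexing set on either side (each decomposition contributing a single copy of the corresponding summand) yields the claimed equality of objects of $\CV$.

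Finally, I would remark that the bijection $(U,V)\leftrightarrow(A,B,C)$ commutes with the action of any bijective function $\sigma\colon X\to Y$ in $\mathfrak{S}$: the restrictions of $\sigma$ to $U\setminus V$, $V\setminus U$, $U\cap V$ are precisely the restrictions to $A$, $B$, $C$, so the identification is natural in $X$ and defines an isomorphism of species. There is really no obstacle here: the proposition is a reindexing that makes the disjoint parts $A = U\setminus V$ and $B = V\setminus U$ explicit alongside the overlap $C = U\cap V$ which already appears in \eqref{Lten}.
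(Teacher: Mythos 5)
Your proposal is correct and is essentially the paper's own proof: the paper simply records the mutually inverse assignments $U=A+C$, $V=B+C$ and $A=U\backslash V$, $B=V\backslash U$, $C=U\cap V$, which is exactly your bijection. Your additional checks that the summands match and that the reindexing is natural in $X$ are fine but routine elaborations of the same argument.
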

\begin{proof} Given $X=A+B+C$, put $U=A+C$ and $V=B+C$.
Given $X=U\cup V$, put $A= U\backslash V$, $B= V\backslash U$, and $C=U\cap V$.
\end{proof}

The $n$-fold version of this tensor product is
\begin{gather}
 \otimes^L_n(F_1,\dots,F_n)X    \nonumber\\
 \qquad{}
  =   \sum_{X=\sum\limits_{\varnothing \ne S\subseteq \langle n \rangle} {A_S}} {L\left( \sum_{S}(\#S-1)\cdot A_S\right) \otimes {F_1\left( \sum_{1\in S}A_S\right) \otimes \dots \otimes F_n\left( \sum_{n\in S}A_S\right) }}  .\label{nfoldLtensor1}
\end{gather}
This yields the formula in Proposition~\ref{altLtensor} for $n=2$ by taking $A=A_{1}$, $B=A_{2}$, $C=A_{\{1,2\}}$. Note that~\eqref{nfoldLtensor1} is unchanged if we replace $\langle n \rangle$ by
any set of cardinality~$n$.

\begin{Remark}
As Joachim Kock reminded me, if we replace $\langle n \rangle$ by the
`$(n-1)$-simplex' $[n-1] = \{0,1,\dots , n-1\}$, then the non-empty subsets~$S$
correspond to the non-degenerate faces of~$[n-1]$ and~$\#S-1$ is the dimension
of the face.
\end{Remark}

Let us consider the ef\/fect of inserting one pair of parentheses in a multiple tensor~\eqref{nfoldLtensor1}.
We look at
\begin{gather*}
\otimes^L_{p+1+r}\big(F_1,\dots,F_p, \otimes^L_q(F_{p+1},\dots,F_{p+q}),F_{p+q+1},\dots,F_{p+q+r}\big)X   .
\end{gather*}
Using \eqref{nfoldLtensor1} twice, once with $n = p+1+r$ and once with $n=q$, we obtain the expression
\begin{gather*}
 L\left(\sum_{T}(\#T-1)\cdot B_T\right)\otimes F_1\left(\sum_{1\in T}B_T\right) \otimes \dots \otimes F_p\left(\sum_{p\in T}B_T\right)  \\
\qquad{} \otimes   L\left(\sum_{R}(\#R-1)\cdot C_R\right)\otimes F_{p+1}\left(\sum_{p+1\in R}C_R\right) \otimes \dots \otimes F_{p+q}\left(\sum_{p+q\in R}C_R\right) \nonumber \\
 \qquad{} \otimes   F_{p+q+1}\left(\sum_{p+q+1\in T}B_T\right) \otimes \dots \otimes F_{p+q+r}\left(\sum_{p+q+r\in T}B_T\right) \nonumber
\end{gather*}
summed over all families
\begin{gather*}
B = \big( B_T \, | \, \varnothing \ne T\subseteq \{1,\dots , p,\star ,p+q+1,\dots , p+q+r\} \big)
\end{gather*}
providing a partition
$X= \sum_TB_T$ of $X$, together with all families
\begin{gather*}
C = \big( C_R \, | \, \varnothing \ne R\subseteq \{p+1,\dots , p+q\} \big)
\end{gather*}
providing a partition $\sum_{\star \in T}B_T= \sum_RC_R$ of $\sum_{\star \in T}$.
Using the fact that $L$ lands in $\mathcal{Z}\CV$ and that~$L$ is strong monoidal, we obtain
the isomorphic expression
\begin{gather}
 L \left(\sum_{T}(\#T-1)\cdot B_T + \sum_{R}(\#R-1)\cdot C_R \right)
  \otimes   F_1\left(\sum_{1\in T}B_T\right) \otimes \dots \otimes F_p\left(\sum_{p\in T}B_T\right) \nonumber \\
\qquad{} \otimes   F_{p+1}\left(\sum_{p+1\in R}C_R\right) \otimes \dots \otimes F_{p+q}\left(\sum_{p+q\in R}C_R\right) \nonumber \\
\qquad{}  \otimes   F_{p+q+1}\left(\sum_{p+q+1\in T}B_T\right) \otimes \dots \otimes F_{p+q+r}\left(\sum_{p+q+r\in T}B_T\right)\label{nfoldLtensor3} \end{gather}
summed over the same families $(B,C)$.
For $\star \in T$, we have $B_T = \sum_R{C_R\cap B_T}$.
On the other hand,  $C_R = \sum_{\star \in T}{C_R\cap B_T}$.
Put
\begin{gather*}
Q = \{p+1,\dots , p+q \} \qquad  \text{and}  \qquad N = \{1,\dots ,p\}\cup \{ p+q+1, \dots , p+q+r  \}  ,
\end{gather*}
and obtain a family
\begin{gather*}
A =  (A_S \, | \, \varnothing \ne S\subseteq \langle p+q+r \rangle  )
\end{gather*}
partitioning $X$ by def\/ining
\begin{gather*}
A_S =
\begin{cases}
B_S & \mbox{for} \ S\cap Q = \varnothing, \\
C_{S\cap Q} \cap B_{(S\cap N) \cup \{\star \}} & \mbox{for} \ S\cap Q \ne \varnothing   .
\end{cases}
\end{gather*}
Then we can recover the $B$ and $C$ families via
\begin{gather*}
B_T =
\begin{cases}
A_T & \mbox{for} \  \star \notin T , \\
\sum_R{A_{R\cup (T\backslash \star)}} & \mbox{for} \ \star \in T,
\end{cases}
\qquad
\text{and}
\qquad
C_R = \sum_{\star \in T}{A_{R\cup (T\backslash \star)}}  .
\end{gather*}
We have the following equations
\begin{alignat*}{3}
& (i)\quad && \sum_{S}{(\#S-1)\cdot A_S} = \sum_{T}{(\#T-1)\cdot B_T} + \sum_{R}{(\#R-1)\cdot C_R},&\\
& (ii) \quad &&
\sum_{k\in S}{A_S} =
\begin{cases}
\displaystyle \sum\limits_{k\in T}{B_T} & \mbox{for} \ 1\le k \le p \ \text{ or } \ p+q +1\le k \le p+q + r,  \vspace{1mm}\\
\displaystyle \sum\limits_{k\in R}{C_R} & \mbox{for} \ p+1\le k \le p+q  .
\end{cases}
\end{alignat*}
This shows that the sum of the expressions \eqref{nfoldLtensor3} over the pairs~$(B,C)$
is equal to \eqref{nfoldLtensor1} with $n = p+q+r$.
Remember however that the tensor product $+$ on $\mathfrak{S}$ is not strict symmetric;
the symmetry on $\mathfrak{S}$ provides canonical bijections between the left- and right-hand
sides of~(i) and~(ii).
Since~$L$ is braided, we have constructed a natural isomorphism
\begin{gather}
 a_{p,q,r} \colon  \ \otimes^L_n(F_1,\dots,F_{p+q+r})  \nonumber \\
  \hphantom{a_{p,q,r} \colon  \ }{} \cong
\otimes^L_{p+1+r}\big(F_1,\dots,F_p, \otimes^L_q(F_{p+1},\dots,F_{p+q}),F_{p+q+1},\dots,F_{p+q+r}\big)   . \label{nfoldassoc}
\end{gather}

Now consider the Mac Lane--Stashef\/f pentagon for 2-fold bracketings of
$F_1\otimes^L F_2\otimes^L F_3 \otimes^L F_4$ as the vertices.
Let $a\colon H \to K$ denote one of the edges of the pentagon obtained using
the associativity isomorphisms~\eqref{Lassoc}.
There is a composite~$b$ of two isomorphisms, each using one instance
of an isomorphism \eqref{nfoldassoc}, which goes from
$\otimes^L_4(F_1,F_2,F_3,F_4)$ to~$H$, and another one
$c \colon \otimes^L_4(F_1,F_2,F_3,F_4) \to H$.
By coherence of the braided strong monoidal functor~$L$, it follows that $a\circ b = c$.
Commutativity of the pentagon is a consequence of commutativity of all
these triangular sides of the so-formed pentagonal cone.

\section[Promonoidal structures on $\mathfrak{S}$]{Promonoidal structures on $\boldsymbol{\mathfrak{S}}$}\label{psoS}

For f\/inite sets $A$, $B$ and $X$, let $\mathrm{Cov}(A,B;X)$ denote the set of jointly surjective pairs
$(\mu , \nu)$ of injective functions
\begin{gather*}
A\stackrel{\mu} \lra X \stackrel{\nu} \lla B   .
\end{gather*}
We write $A\times_X B$ for the pullback of $\mu$ and $\nu$.

Def\/ine a functor
\begin{gather*}
\mathrm{P} \colon \ \mathfrak{S}^{\mathrm{op}}\times \mathfrak{S}^{\mathrm{op}}\times \mathfrak{S} \lra \CV
\end{gather*}
by
\begin{gather*}
\mathrm{P}(A,B;X) = \sum_{(\mu , \nu)\in \mathrm{Cov}(A,B;X)}{L(A\times_X B)}   .
\end{gather*}

\begin{Proposition}
$(F\otimes^L G)X \cong \int^{A,B}{\mathrm{P}(A,B;X)\otimes FA\otimes GB}$.
\end{Proposition}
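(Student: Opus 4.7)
The plan is to recognise the coend on the right-hand side as a colimit over the groupoid $\mathfrak{S}^{\mathrm{op}}\times \mathfrak{S}^{\mathrm{op}}$ and collapse it by two applications of the co-Yoneda lemma, after f\/irst rewriting $\mathrm{P}(A,B;X)$ so that the underlying decomposition of~$X$ is visibly separated from the labelling bijections.

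The f\/irst step is to observe that giving a jointly surjective pair of injections $A\stackrel{\mu}\lra X \stackrel{\nu}\lla B$ is the same as giving subsets $U,V\subseteq X$ with $X=U\cup V$ together with bijections $A\cong U$ and $B\cong V$, and that under this identif\/ication the pullback $A\times_XB$ is canonically identif\/ied with $U\cap V$ via the restriction of~$\mu$ (equivalently, of~$\nu$). Hence $\mathrm{P}(A,B;X)$ is naturally isomorphic to
\begin{gather*}
\sum_{X=U\cup V}{\mathfrak{S}(A,U)\cdot \mathfrak{S}(B,V)\cdot L(U\cap V)},
\end{gather*}
with the action of $(\sigma\colon A'\to A,\tau\colon B'\to B)$ being precomposition with~$\sigma$ and~$\tau$; the summation index $(U,V)$ and the factor $L(U\cap V)$ are unaf\/fected.

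The second step is to substitute this into the coend and use the standing hypothesis that $\otimes$ preserves coproducts on either side, and therefore coends, to move the outer coproduct and the factor $L(U\cap V)$ past the coend. This yields
\begin{gather*}
\sum_{X=U\cup V}{L(U\cap V)\otimes \left(\int^{A}{\mathfrak{S}(A,U)\cdot FA}\right)\otimes \left(\int^{B}{\mathfrak{S}(B,V)\cdot GB}\right)}.
\end{gather*}
Since $\mathfrak{S}$ is a groupoid, $\mathfrak{S}(A,U)\cong \mathfrak{S}(U,A)$, so each inner coend is the usual co-Yoneda reduction and evaluates to~$FU$ and~$GV$ respectively. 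The result is $\sum_{X=U\cup V}{L(U\cap V)\otimes FU\otimes GV}=(F\otimes^LG)X$, as required.

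The delicate point is the naturality in $(A,B)$ of the identif\/ication $A\times_XB\cong U\cap V$ in the f\/irst step: the isomorphism depends on $(\mu,\nu)$, so when these are precomposed with $(\sigma,\tau)$ one must check that the induced change on the pullback matches the $L$-functoriality used when factoring out $L(U\cap V)$. This is a routine verif\/ication using naturality of pullbacks in the cospan together with functoriality of~$L$; no braiding or monoidal-centre data intervenes here, since we are not reshuf\/f\/ling tensor factors.
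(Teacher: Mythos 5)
Your proof is correct and lands on the same isomorphism as the paper, but it is organised differently in a way worth noting. The paper's proof simply writes down the universal dinatural transformation $\theta_{A,B}\colon \mathrm{P}(A,B;X)\otimes FA\otimes GB\to\sum_{X=U\cup V}L(U\cap V)\otimes FU\otimes GV$, using on the summand at $(\mu,\nu)$ the bijections $A\cong\mu(A)$, $B\cong\nu(B)$, $A\times_XB\cong\mu(A)\cap\nu(B)$ and the coproduct injection at $(\mu(A),\nu(B))$; the universality of this wedge is asserted rather than argued. You instead first rewrite $\mathrm{P}(-,-;X)$ as the coproduct $\sum_{X=U\cup V}\mathfrak{S}(-,U)\cdot\mathfrak{S}(-,V)\cdot L(U\cap V)$ of copowered representables and then collapse the coend by co-Yoneda; since the co-Yoneda counit is exactly the paper's wedge, your route supplies the justification the paper leaves implicit. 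Two minor points: the flip $\mathfrak{S}(A,U)\cong\mathfrak{S}(U,A)$ is not actually needed, because the density formula $FU\cong\int^{A}\mathfrak{S}(A,U)\cdot FA$ already has the correct variance (contravariant in $A$) for the coend at hand; and the interchange of the coend with the finite coproduct over $(U,V)$ and with tensoring by $L(U\cap V)$ should be explicitly attributed to the standing hypothesis of Section~\ref{Ltenspecies} that tensoring preserves finite coproducts on either side. The ``delicate point'' you flag --- naturality under precomposition of the identification $A\times_XB\cong\mu(A)\cap\nu(B)$ --- is indeed the only real verification, and it goes through as you say.
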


\begin{proof}
A universal dinatural transformation
\begin{gather*}
\theta_{A,B} \colon \ \mathrm{P}(A,B;X) \otimes FA\otimes GB \lra \sum_{X=U\cup V}{L(U\cap V) \otimes FU\otimes GV}
\end{gather*}
is def\/ined by taking its composite with the injection at $(\mu , \nu)\in \mathrm{Cov}(A,B;X)$
to be obtained from the $(\mu(A),\nu(B))$ injection and the bijections $A\cong \mu(A)$,
$B\cong \nu(B)$, $A\times_X B \cong \mu(A)\cap \nu(B)$, noting $X = \mu(A)\cup \nu(B)$.
\end{proof}

By Day's general theory of promonoidal categories \cite{DayPhD, DayConv}, we have
\begin{Corollary}
If moreover $\CV$ is $($left and right$)$ closed and sufficiently complete then $\otimes^L$ defines a~$($left and right$)$ closed monoidal structure on $[\mathfrak{S},\CV]$.
The monoidal structure coincides with that of Section~{\rm \ref{Ltenspecies}}.
\end{Corollary}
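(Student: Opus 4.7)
The plan is to invoke Day's general convolution theorem \cite{DayPhD, DayConv}: given a promonoidal structure $(\mathrm{P}, \mathrm{J})$ on a small category $\mathcal{C}$ and a (left and right) closed, sufficiently complete monoidal $\CV$, the formula
\begin{gather*}
(F\star G)X = \int^{A,B} \mathrm{P}(A,B;X)\otimes FA\otimes GB
\end{gather*}
furnishes $[\mathcal{C},\CV]$ with a (left and right) closed monoidal structure, whose unit is the representable $\mathcal{C}(I_{\mathrm{P}}, -)\cdot I$ attached to the promonoidal unit object. I take $\mathcal{C}=\mathfrak{S}$, $\mathrm{P}$ as in the preceding proposition, and $I_{\mathrm{P}} = \varnothing$.

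The identification with $\otimes^L$, and hence the second half of the corollary, is immediate from the preceding Proposition, which exhibits $(F\otimes^L G)X$ as exactly this coend. The unit $\mathrm{J}$ of Section~\ref{Ltenspecies} coincides with $\mathfrak{S}(\varnothing, -)\cdot I$, which is Day's convolution unit. So granted Day's theorem, what remains is to verify that the data $(\mathrm{P}, \mathrm{J})$ really is a promonoidal structure on $\mathfrak{S}$.

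The unit constraints on $\mathrm{P}$ are immediate: $\mathrm{P}(\varnothing, B; X)$ reduces to the set of bijections $B\cong X$ tensored with $L(\varnothing)\cong I$, yielding the canonical left unit, and symmetrically on the right. The associativity isomorphism
\begin{gather*}
\int^{C}\mathrm{P}(A,B;C)\otimes \mathrm{P}(C,D;X) \cong \int^{C}\mathrm{P}(A,C;X)\otimes \mathrm{P}(B,D;C)
\end{gather*}
is, when unwound using Proposition~\ref{altLtensor} via the translation $U=A+C$, $V=B+C$, precisely the $n=3$ case of the iterated-tensor rebracketing isomorphism~\eqref{nfoldassoc} established in Section~\ref{titac}. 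Pentagonal coherence for $\mathrm{P}$ transcribes the pentagonal coherence observed at the close of that section, which in turn relied on the coherence of the braided strong monoidal functor $L$.

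The main obstacle is purely bookkeeping: matching the $(\mu,\nu)$-cover presentation of $\mathrm{P}$ against the $(B_T, C_R)$-partition calculus of Section~\ref{titac}, so that the coherence data extracted from $\otimes^L$ is recognised as promonoidal coherence for $\mathrm{P}$. Once this match is made, Day's theorem delivers both the monoidal structure and its closedness, with internal homs given by the usual end formulas using the closedness of $\CV$ and enough completeness of $\CV$ for those ends to exist.
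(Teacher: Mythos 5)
Your proposal is correct and matches the paper's own argument, which consists precisely of citing Day's theory of promonoidal convolution \cite{DayPhD, DayConv} applied to the $\mathrm{P}$ of the preceding Proposition; your identification of the unit $\mathrm{J}\cong\mathfrak{S}(\varnothing,-)\cdot I$ and of the promonoidal associativity with the rebracketing isomorphisms of Section~\ref{titac} is a reasonable fleshing-out of what the paper leaves implicit (namely that the promonoidal coherence is inherited from the monoidal coherence already established for $\otimes^L$).
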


\section[The weighted bimonoidale structure on $\operatorname{fam}\mathfrak{S}$]{The weighted bimonoidale structure on $\boldsymbol{\operatorname{fam}\mathfrak{S}}$}\label{weightedbimonoidalefam}

Lately (for example, in~\cite{103}), we have used the term {\em monoidale} for
``pseudomonoid'', also called ``monoidal object'', in a monoidal bicategory~$\CM$~\cite{mbaHa}.
For example, the monoidales in the cartesian monoidal bicategory~$\mathrm{Cat}$
are monoidal categories.

When the monoidal bicategory $\CM$ is symmetric, the monoidales themselves form a
symmetric monoidal bicategory where the morphisms are strong monoidal.
With the same tensor product, the opposite bicategory~$\CM^{\mathrm{op}}$ is symmetric
monoidal.
A~{\em bimonoidale in $\CM$} is a monoidale in~$\CM^{\mathrm{op}}$.
Incidentally, every monoidale in $\mathrm{Cat}$ is uniquely a bimonoidale.

Consider the 2-category $\mathrm{Cat}_{+}$ of (small) categories admitting f\/inite coproducts,
and f\/inite-coproduct-preserving functors. This becomes a symmetric closed monoidal
bicategory (see~\cite{mbaHa}) with tensor product $\CA \boxtimes \CB$ representing functors
$H \colon \CA \times \CB \to \CX$ for which each $H(A,-)$ and each $H(-,B)$ is f\/inite
coproduct preserving.
Clearly the monoidal category~$\CV$ of Section~\ref{Ltenspecies} is a~monoidale
(= pseudomonoid) in $\mathrm{Cat}_{+}$.

For any category $\CC$, we write $\operatorname{fam}\CC$ for the free f\/inite coproduct
completion of $\CC$. That is, $\mathrm{fam}$ provides the left biadjoint to the forgetful
2-functor $\mathrm{Cat}_{+} \to \mathrm{Cat}$. Indeed, $\mathrm{fam}$ is a strong monoidal
pseudofunctor; in particular, there is a canonical equivalence
\begin{gather*}
\operatorname{fam}\CC \boxtimes \operatorname{fam}\CD \simeq \operatorname{fam} (\CC \times \CD  )   .
\end{gather*}
Every monoidal category $\CC$ determines a monoidale $\operatorname{fam}\CC$ in $\mathrm{Cat}_{+}$.

Explicitly, the objects of $\operatorname{fam}\CC$ can be written formally as $\sum\limits_{s\in S}{C_s}$
where $S$ is a f\/inite set and $C_s\in \CC$.
Then, if $\CC$ is monoidal, the monoidale structure on $\operatorname{fam}\CC$ is def\/ined by
\begin{gather*}
\sum_{s\in S}{C_s}\otimes \sum_{t\in T}{D_t} = \sum_{(s,t)\in S\times T}{C_s\otimes D_t}   .
\end{gather*}

We are interested in $\operatorname{fam}\mathfrak{S}$.
By what we have just said, this is a monoidale in $\mathrm{Cat}_{+}$:
\begin{gather*}
\sum_{s\in S}{U_s}\otimes \sum_{t\in T}{V_t} = \sum_{(s,t)\in S\times T}{(U_s+V_t)}   .
\end{gather*}

Fix a f\/inite set $\Lambda$ and def\/ine $L \colon \mathfrak{S} \to \mathrm{Set}$ by $LX = \Lambda^X$
and $L\sigma = \Lambda^{\sigma^{-1}}$. Def\/ine a coproduct-preserving functor
\begin{gather}\label{Delta}
\Delta \colon \ \operatorname{fam}\mathfrak{S} \lra \operatorname{fam}(\mathfrak{S}\times \mathfrak{S}) \simeq \operatorname{fam}\mathfrak{S}\boxtimes \operatorname{fam}\mathfrak{S}
\end{gather}
 by
 \begin{gather*}
\Delta(X) = \sum_{X=A+B+C}{L(C)\cdot (A+C,B+C)}
\end{gather*}
for $X\in \mathfrak{S}$.

\begin{Proposition}
The functor $\Delta$ of \eqref{Delta} is strong monoidal.
\end{Proposition}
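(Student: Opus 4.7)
The plan is to exhibit a natural isomorphism
\begin{gather*}
\Delta(X)\otimes \Delta(Y) \cong \Delta(X\otimes Y)
\end{gather*}
together with a unit isomorphism $(\varnothing,\varnothing) \cong \Delta(\varnothing)$, and then check coherence. Since both source and target preserve finite coproducts, it suffices to exhibit these on objects $X,Y$ coming from $\mathfrak{S}$ itself (where $X\otimes Y = X+Y$ in $\operatorname{fam}\mathfrak{S}$) and then extend by distributivity.

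First I would rewrite the two sides explicitly. On the left,
\begin{gather*}
\Delta(X)\otimes \Delta(Y) = \sum_{\substack{X=A+B+C \\ Y=A'+B'+C'}} L(C)\cdot L(C')\cdot \bigl((A+C)+(A'+C'),(B+C)+(B'+C')\bigr),
\end{gather*}
while on the right,
\begin{gather*}
\Delta(X+Y) = \sum_{X+Y=\alpha+\beta+\gamma} L(\gamma)\cdot (\alpha+\gamma,\beta+\gamma).
\end{gather*}
The key combinatorial observation is that a partition $X+Y=\alpha+\beta+\gamma$ is determined exactly by its traces on each summand: setting $A=X\cap\alpha$, $B=X\cap\beta$, $C=X\cap\gamma$ and $A'=Y\cap\alpha$, $B'=Y\cap\beta$, $C'=Y\cap\gamma$ yields a bijection between 3-part partitions of $X+Y$ and pairs of 3-part partitions of $X$ and of $Y$, with $\gamma = C+C'$, $\alpha+\gamma = (A+C)+(A'+C')$, and $\beta+\gamma = (B+C)+(B'+C')$. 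Under this bijection the index sets on the two sums match up canonically (modulo the symmetries of $+$ in $\mathfrak{S}$).

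Next I would match the multiplicities. Because $L$ is strong monoidal from $(\mathfrak{S},+)$ to $(\mathrm{Set},\times)$, there is a canonical isomorphism $L(\gamma)=L(C+C')\cong L(C)\times L(C')$, so the copower $L(\gamma)\cdot(\alpha+\gamma,\beta+\gamma)$ agrees with $L(C)\cdot L(C')\cdot((A+C)+(A'+C'),(B+C)+(B'+C'))$. Assembling the two correspondences gives the desired term-by-term isomorphism, and naturality in $X$ and $Y$ is automatic because a bijection $X\to X_1$, $Y\to Y_1$ transports partitions and the isomorphism $L(C+C')\cong L(C)\times L(C')$ is natural. For the unit, $\Delta(\varnothing)$ has only the trivial decomposition $\varnothing=\varnothing+\varnothing+\varnothing$ with $L(\varnothing)=\Lambda^\varnothing=\{*\}$, yielding $(\varnothing,\varnothing)$, which is the monoidal unit of $\operatorname{fam}\mathfrak{S}\boxtimes \operatorname{fam}\mathfrak{S}$.

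The main obstacle, as usual, is the coherence: checking that these constraint isomorphisms satisfy the associativity pentagon and triangle diagrams for strong monoidal pseudofunctors. However, once one unravels the definitions, the pentagon reduces to the statement that four-fold partitions of $X+Y+Z$ correspond to triples of four-fold partitions of $X$, $Y$, $Z$ (by taking traces), and that $L(C+C'+C'')\cong L(C)\times L(C')\times L(C'')$ coherently; both facts follow from functoriality of the trace construction and the braided strong monoidal structure of $L$, already invoked in Section~\ref{titac} for the analogous coherence of $\otimes^L$. The triangle for the unit is immediate since the trivial partition is absorbed uniquely on either side. This furnishes the required pseudonatural strong-monoidal structure on $\Delta$.
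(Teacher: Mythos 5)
Your proof is correct and follows essentially the same route as the paper: you decompose a three-part partition of $X+Y$ into its traces on $X$ and on $Y$ (the paper's $P=X\cap A$, $Q=X\cap B$, $R=X\cap C$, etc.) and use the strong monoidality of $L$ to split $L(C+C')\cong L(C)\times L(C')$. The extra detail you supply on the unit constraint and coherence goes beyond what the paper records, but the core argument is identical.
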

\begin{proof}
In $\Delta(X+Y)= \sum_{X+Y=A+B+C}{L(C)\cdot (A+C,B+C)}$ we can put
\begin{gather*}
P=X\cap A,\qquad Q=X\cap B, \qquad R=X\cap C,\\  U=Y\cap A, \qquad V=Y\cap B, \qquad W=Y\cap C
\end{gather*}
to obtain
\begin{gather*}
 \Delta(X+Y)
  =    \sum_{X =P+Q+R, Y=U+V+W}{L(R+W)\cdot (P+U+R+W,Q+V+R+W)} \\
 \hphantom{\Delta(X+Y) }{}
  \cong   \sum_{X =P+Q+R}{L(R)\cdot (P+R,Q+R)\times \sum_{Y =U+V+W}{L(W)\cdot (U+W,V+W)}} \\
 \hphantom{\Delta(X+Y) }{}
  \cong   \Delta X \times \Delta Y   ,
\end{gather*}
as required.
\end{proof}

The relationship between this structure and the promonoidal structure of Section~\ref{psoS}
will be examined elsewhere; indeed, see~\cite{GaSt}.

\section{Weighted categorical derivations}\label{wcd}

Suppose $\CV$ is a symmetric monoidal closed category which is complete and cocomplete,
and suppose $L\colon \mathfrak{S} \to \CV$ is a strong monoidal functor.

Harking back to Remark~\ref{Lack}, we are prompted to consider the 2-category
\begin{gather}\label{Hom}
\mathfrak{E} = \mathrm{Hom}(\Sigma \mathfrak{S},\CV\text{-}\mathrm{Cat}_{L,+})   .
\end{gather}
Here $\Sigma \mathfrak{S}$ denotes the bicategory with one object (denoted $\star$)
whose homcategory is the symmetric groupoid $\mathfrak{S}$; composition
is provided by the monoidal structure~$+$ on~$\mathfrak{S}$.
Also~$\CV\text{-}\mathrm{Cat}_{L,+}$ denotes the 2-category of $\CV$-categories
admitting f\/inite coproducts and tensoring with the object~$L(X)$ of~$\CV$;
the morphisms are $\CV$-functors preserving these colimits;
the 2-cells are $\CV$-natural transformations.
The objects of~\eqref{Hom} are pseudofunctors $T \colon \Sigma \mathfrak{S}\to \CV\text{-}\mathrm{Cat}_{L,+}$,
the morphisms are pseudonatural transformations, and the 2-cells are modif\/ications
(in terminology of~\cite{KelSt1974}).
Such an object $T$ determines a $\CV$-category $T\star = \CM \in \CV\text{-}\mathrm{Cat}_{L,+}$
and a strong monoidal functor
$T_{\star \star}\colon \mathfrak{S} \to \CV\text{-}\mathrm{Cat}_{L,+}(\CM,\CM)$.
This $T_{\star \star}$ is determined up to equivalence by an endomorphism
$D\colon \CM \to \CM$ in $\CV\text{-}\mathrm{Cat}_{L,+}$
and an involutive Yang--Baxter\footnote{This is Rodney Baxter
\url{http://en.wikipedia.org/wiki/Rodney_Baxter}, not the author of~\cite{Baxt1960}.} operator
$\rho \colon D\circ D \Rightarrow D\circ D$ on $D$
(for example, see~\cite{TYBO} for terminology).
Then $T_{\star \star}\langle n \rangle \cong D^{\circ n}$ and, for the
non-identity bijection $\tau \colon \langle 2 \rangle \to \langle 2 \rangle$,  $T\tau$ transports to $\rho$.
Therefore we shall write the object~$T$ of~$\mathfrak{E}$~\eqref{Hom} as a pair $(\CM, D^*)$
where $T\star = \CM$ and $T_{\star \star} = D^*$.
The morphisms of $\mathfrak{E}$ are then squares
\begin{gather*}
\xymatrix{
\CM \ar[d]_{D^*X}^(0.5){\phantom{AA}}="1" \ar[rr]^{K}  && \CN \ar[d]^{E^*X}_(0.5){\phantom{AA}}="2" \ar@{=>}"1";"2"^-{\kappa_X \cong}
\\
\CM \ar[rr]_-{K} && \CN
}
\end{gather*}
in $\CV\text{-}\mathrm{Cat}_{L,+}$ which are $\CV$-natural in $X$ and, stacking vertically, respect the tensor in $\mathfrak{S}$.
Ge\-ne\-ralizing the tensor $\boxtimes$ on $\mathrm{Cat}_{+}$ as in Section~\ref{weightedbimonoidalefam},
we have a tensor, also denoted by $\boxtimes$, on $\CV\text{-}\mathrm{Cat}_{L,+}$, where the
tensor product $\CA \boxtimes \CB$ represents $\CV$-functors
$H \colon \CA \otimes \CB \to \CX$ for which each of~$H(A,-)$ and~$H(-,B)$ preserves f\/inite
coproducts and tensoring with each~$L(X)$.
This makes $\CV\text{-}\mathrm{Cat}_{L,+}$ into a monoidal bicategory.

This tensor product $\boxtimes$ on $\CV\text{-}\mathrm{Cat}_{L,+}$ lifts to one,
denoted by $\widehat{\boxtimes}$, on $\mathfrak{E}$ \eqref{Hom}:
\begin{gather*}
(\CM, D^*)\widehat{\boxtimes} (\CN, E^*) = \big(\CM \boxtimes \CN, D^*\widehat{\boxtimes}E^*\big)m,
\end{gather*}
where
\begin{gather*}
\big(D^*\widehat{\boxtimes}E^*\big)X = \sum_{X=U\cup V}{L(U\cap V) \otimes
D^*U \boxtimes E^*V} .
\end{gather*}
To see that $D^*\widehat{\boxtimes}E^* \colon \mathfrak{S}
\to \CV\text{-}\mathrm{Cat}_{L,+}(\CM \boxtimes \CN,\CM \boxtimes \CN)$
is strong monoidal, we calculate
\begin{gather*}
 (D^*\widehat{\boxtimes}E^*)(X+Y)  \cong  \sum_{X+Y=U\cup V}{L(U\cap V) \otimes
D^*U \boxtimes E^*V} \\
\qquad{} \cong   \sum_{X=U_1\cup V_1, Y = U_2\cup V_2}{L(U_1\cap V_1 + U_2\cap V_2) \otimes
(D^*U_1\circ D^*U_2) \boxtimes (E^*V_1\circ E^*V_2)} \\
\qquad{} \cong \sum_{X=U_1\cup V_1, Y = U_2\cup V_2}{L(U_1\cap V_1) \otimes L(U_2\cap V_2) \otimes
(D^*U_1\boxtimes E^*V_1) \circ (D^*U_2\boxtimes E^*V_2)}\\
\qquad{} \cong \sum_{X=U_1\cup V_1, Y = U_2\cup V_2}{L(U_1\cap V_1) \otimes
(D^*U_1\boxtimes E^*V_1) \circ L(U_2\cap V_2)\otimes (D^*U_2\boxtimes E^*V_2)} \\
\qquad{} \cong (D^*\widehat{\boxtimes}E^*)X \circ (D^*\widehat{\boxtimes}E^*)Y   .
\end{gather*}
In this way, $\mathfrak{E}$ \eqref{Hom} becomes a monoidal bicategory.

\begin{Definition} An {\em $L$-weighted derivation} $D^*$ on a monoidale $\CM$ in $\CV\text{-}\mathrm{Cat}_{L,+}$
is a lifting of the monoidale structure on $\CM$ to a monoidale structure on
$(\CM,D^*)$ in $\mathfrak{E}$~\eqref{Hom}.
\end{Definition}

\begin{Example}\label{specder}
An $L$-weighted derivation $D^* \colon \mathfrak{S} \to \CV\text{-}\mathrm{Cat}_{L,+}\left( [\mathfrak{S}, \CV],[\mathfrak{S}, \CV] \right)$ on the monoidale $([\mathfrak{S}, \CV], \otimes^L)$
is def\/ined by $(D^*X)F = F(X+-)$. The main point is the canonical isomorphism below
\begin{gather*}
\xymatrix{
[\mathfrak{S}, \CV]\boxtimes [\mathfrak{S}, \CV] \ar[d]_{(D^*\widehat{\boxtimes}D^*)X}^(0.5){\phantom{AAAAAA}}="1" \ar[rr]^{\otimes^L}  && [\mathfrak{S}, \CV] \ar[d]^{D^*X}_(0.5){\phantom{AAAAAA}}="2" \ar@{=>}"1";"2"^-{\cong}
\\
[\mathfrak{S}, \CV]\boxtimes [\mathfrak{S}, \CV] \ar[rr]_-{\otimes^L} && [\mathfrak{S}, \CV]
}
\end{gather*}
 \end{Example}

\begin{Remark}
The f\/irst item of Remark~\ref{Lack} has a categorical version.
The forgetful 2-functor $\mathrm{U} \colon \mathfrak{E} \to \CV\text{-}\mathrm{Cat}_{L,+}$ has
a right biadjoint $\mathrm{JS}$ taking the $\CV$-category $\CA$ to the object of $\mathfrak{E}$
determined be the $\CV$-category $\mathrm{JS}\CA = [\mathfrak{S}, \CA]$ of
species in $\CA$, equipped with $L$-weighted derivation the $D^*$ just as in
Example~\ref{specder} with the codomain $\CV$ replaced by~$\CA$.
Since $\mathrm{U}$ is strong monoidal, the biadjunction
$\mathrm{U} \dashv_{\mathrm{bi}} \mathrm{JS}$
is monoidal.
Consequently the biadjunction lifts to one between the 2-categories of monoidales
in $\mathfrak{E}$ and $\CV\text{-}\mathrm{Cat}_{L,+}$.
Indeed $\mathrm{U}$ is pseudocomonadic.
\end{Remark}

\section{The iterated tensor product again}\label{titpa}

Observe the following simple reindexing of \eqref{Lten}.

\begin{Proposition}\label{alt2Ltensor}
An alternative definition of $F\otimes^LG$ is
\begin{gather*}
(F\otimes^LG)X = \sum_{V\subseteq U\subseteq X}{L(U\backslash V) \otimes F(U)\otimes G(X\backslash V)}   .
\end{gather*}
\end{Proposition}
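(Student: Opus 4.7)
The plan is to prove this by a straightforward reindexing of the original definition~\eqref{Lten}. Specifically, I would set up the change of variables
\begin{gather*}
(V \subseteq U \subseteq X) \;\longleftrightarrow\; (X = U' \cup V'), \qquad U' := U,\quad V' := X \setminus V,
\end{gather*}
with inverse $U := U'$, $V := X \setminus V'$. First I would verify this is a bijection between the two indexing sets: given $V \subseteq U \subseteq X$, one has $U \cup (X\setminus V) \supseteq U \cup (X\setminus U) = X$, so $U' \cup V' = X$; conversely, given $U' \cup V' = X$, the set $X\setminus V'$ is contained in $U'$, so $V \subseteq U \subseteq X$. The two constructions are clearly mutually inverse.

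Next I would check that the summands match under this correspondence. The intersection computes to
\begin{gather*}
U' \cap V' \;=\; U \cap (X \setminus V) \;=\; U \setminus V
\end{gather*}
using $V \subseteq U$, so $L(U'\cap V') = L(U\setminus V)$. Similarly $F U' = F U$ and $G V' = G(X \setminus V)$, so the summand $L(U'\cap V')\otimes FU' \otimes GV'$ in~\eqref{Lten} coincides termwise with $L(U\setminus V)\otimes F(U)\otimes G(X\setminus V)$.

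Finally, for the definition on morphisms, I would remark that a bijection $\sigma \colon X \to Y$ sends pairs $V \subseteq U \subseteq X$ to pairs $\sigma V \subseteq \sigma U \subseteq Y$, and restricts to bijections on $U \setminus V$ and $X \setminus V$ compatibly with the correspondence above, so the reindexing is natural in $X \in \mathfrak{S}$. There is no real obstacle here; the proposition is purely a combinatorial bookkeeping exercise, and the substance of the content already resides in the original Definition in Section~\ref{Ltenspecies}.
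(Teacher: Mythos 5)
Your change of variables $U' = U$, $V' = X\setminus V$, together with the check that it bijects the two indexing sets and matches summands via $U'\cap V' = U\setminus V$, is precisely the ``simple reindexing'' the paper invokes (it offers no further proof beyond that remark). The proposal is correct and takes essentially the same route, with the added (harmless) bonus of spelling out naturality in $X$.
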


This leads us to another formula for the $n$-fold $L$-weighted tensor product.
Def\/ine the {\em modified $n$-filtration set}\footnote{We are ``modifying'' the f\/iltration $U$ of $X$ by equipping it with the extra subsets $V$.}
on any f\/inite set $X$ by
\begin{gather*}
\mathrm{mFil}_nX= \bigl\{(U,V) \, | \, U =  (0=U_0\subseteq U_1\subseteq \dots \subseteq U_{n-1}\subseteq U_n=X  ), \nonumber \\
\hphantom{\mathrm{mFil}_nX= \bigl\{}{} V =  (V_0, V_1, \dots, V_{n-1}  ) \text{ with } V_i\subseteq U_i \text{ for } 0\le i < n \bigr\}  .
\end{gather*}

\begin{Proposition}
An alternative definition of the $n$-fold tensor product \eqref{nfoldLtensor1} is
\begin{gather}
 \otimes^L_n(F_1,\dots,F_n)X  =  \sum_{(U,V)\in \mathrm{mFil}_nX} {L(U_1\backslash V_1) \otimes \dots \otimes L(U_{n-1}\backslash V_{n-1}})  \nonumber
\\
  \phantom{\otimes^L_n(F_1,\dots,F_n)X  = }{} \otimes  F_1 ( U_1\backslash V_0  ) \otimes \dots \otimes F_n ( U_n\backslash V_{n-1} ). \label{nfoldLtensor4}
\end{gather}
\end{Proposition}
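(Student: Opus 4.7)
The strategy is to exhibit a natural bijection, for each $X\in\mathfrak{S}$, between the indexing sets of the two sums and then to match the summands term-by-term using that $L$ is strong monoidal.

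First, I would construct a map $\Phi$ sending a partition $(A_S)_{\varnothing \ne S\subseteq\langle n\rangle}$ of $X$ to a pair $(U,V)\in \mathrm{mFil}_nX$ defined by
\begin{gather*}
U_k = \sum_{\min S \le k}{A_S}, \qquad V_j = \sum_{\substack{\min S\le j \\ j+1\notin S}}{A_S}.
\end{gather*}
A quick check shows the chain $U_0=\varnothing \subseteq U_1\subseteq \dots \subseteq U_n=X$ and the containment $V_j\subseteq U_j$, so $\Phi$ lands in $\mathrm{mFil}_nX$.

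Second, I would construct the inverse $\Psi$. Given $(U,V)$, assign to each $x\in X$ the set $S_x = \{\,k\in\langle n\rangle : x\in U_k\setminus V_{k-1}\,\}$; since $V_{k-1}\subseteq U_{k-1}$, the minimum $k$ with $x\in U_k$ lies in $S_x$, so $S_x$ is non-empty. Setting $A_S=\{x: S_x=S\}$ gives a partition. The two round trips are then a direct verification: for $x\in A_T$ with $T=\{t_1<\dots<t_m\}$, membership $x\in U_k$ holds exactly when $k\ge t_1$, while $x\in V_{k-1}$ holds when $k-1\ge t_1$ and $k\notin T$, so $x\in U_k\setminus V_{k-1}$ iff $k\in T$, recovering $S_x=T$.

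Third, I would match the summand. For $x\in A_S$ with $\#S=m$, the analysis above shows $x$ lies in $U_k\setminus V_{k-1}$ for exactly the $m$ values $k\in S$ and in $U_j\setminus V_j$ (with $1\le j\le n-1$) for exactly the $m-1$ values $j$ with $j+1\in S\setminus\{\min S\}$. Consequently
\begin{gather*}
U_k\setminus V_{k-1} \cong \sum_{k\in S}{A_S}, \qquad \sum_{j=1}^{n-1}{(U_j\setminus V_j)} \cong \sum_{S}{(\#S-1)\cdot A_S}
\end{gather*}
as objects of $\mathfrak{S}$ under disjoint union. The first family of isomorphisms matches the $F_k$ factors in \eqref{nfoldLtensor1} and \eqref{nfoldLtensor4} (noting $V_0\subseteq U_0=\varnothing$ so $U_1\setminus V_0=U_1$). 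The second, together with the strong monoidal structure on $L$, yields
\begin{gather*}
L(U_1\setminus V_1)\otimes\dots\otimes L(U_{n-1}\setminus V_{n-1}) \cong L\left(\sum_{S}{(\#S-1)\cdot A_S}\right),
\end{gather*}
matching the $L$-factor.

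Finally, naturality in $X\in\mathfrak{S}$ is automatic: a bijection $\sigma\colon X\to Y$ carries a partition $(A_S)$ to $(\sigma A_S)$ and simultaneously carries $(U,V)=\Phi(A_S)$ to $(\sigma U,\sigma V)=\Phi(\sigma A_S)$, since the conditions defining $U_k$ and $V_j$ are preserved under bijection. The only real obstacle is bookkeeping to ensure the bijection $\Phi,\Psi$ is well-defined on both sides (non-emptiness of $S_x$, the nestedness of $U$, and $V_j\subseteq U_j$), after which everything follows from strong monoidality of $L$.
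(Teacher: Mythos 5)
Your proof is correct, but it takes a genuinely different route from the paper. The paper's proof is a one-line induction: it evaluates the left bracketing $(\cdots(F_1\otimes^LF_2)\otimes^L\cdots)\otimes^LF_n$ by repeatedly applying the reindexed binary formula of Proposition~\ref{alt2Ltensor}, each application contributing one more pair $(U_i,V_i)$ of the modified filtration; implicitly it then relies on the coherence analysis of Section~\ref{titac} to identify that left bracketing with the unbiased formula \eqref{nfoldLtensor1}. You instead construct a direct bijection between the two indexing sets --- partitions $(A_S)_{\varnothing\ne S\subseteq\langle n\rangle}$ on one side and modified $n$-filtrations $(U,V)$ on the other --- via $U_k=\sum_{\min S\le k}A_S$ and $V_j=\sum_{\min S\le j,\,j+1\notin S}A_S$, and then match summands using strong monoidality of $L$. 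I checked the key identities: $U_k\setminus V_{k-1}=\sum_{k\in S}A_S$ and the multiplicity count giving $\sum_{j}(U_j\setminus V_j)\cong\sum_S(\#S-1)\cdot A_S$ both hold, and the inverse $\Psi$ is well defined ($\min S_x$ is the least $k$ with $x\in U_k$). Your approach buys a self-contained, bracketing-free comparison of \eqref{nfoldLtensor1} with \eqref{nfoldLtensor4} at the cost of more combinatorial bookkeeping; the paper's approach is shorter but leans on the binary reindexing and the prior identification of iterated binary products with the $n$-fold tensor. The only incomplete spot is that you verify explicitly just one of the two round trips ($\Psi\circ\Phi=\mathrm{id}$); the other direction ($\Phi\circ\Psi=\mathrm{id}$, i.e., recovering $U_k$ and $V_j$ from the $S_x$) is indeed routine but should be recorded, since it is where the nestedness of $U$ and the containments $V_j\subseteq U_j$ are actually used.
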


\begin{proof}
The formula follows by repeated application of the formula of Proposition~\ref{alt2Ltensor}
in eva\-luating the left bracketing
\begin{gather*}
\big(\cdots \big(F_1\otimes^LF_2\big)\otimes^L \dots \big)\otimes^LF_n
\end{gather*}
at $X$.
\end{proof}

Let us relate the formulas \eqref{nfoldLtensor1} and \eqref{nfoldLtensor4} in the case $n=3$.
A modif\/ied 3-f\/iltration $(U,V)\in \mathrm{mFil}_3X$ of $X$ amounts to subsets $U_1\subseteq U_2\subseteq X$
and $V_1\subseteq U_1, V_2\subseteq U_2$. With this we can def\/ine
\begin{gather*}
 A_1 = V_1\cap V_2   , \qquad A_2 = V_2\backslash U_1\cap V_2   , \qquad A_3 = X\backslash U_2 ,   \\
  A_{12} = U_1\cap V_2 \backslash A_1   , \qquad A_{13} = V_1\backslash A_1   , \qquad  A_{23} = (U_2\backslash U_1) \backslash A_2   , \qquad A_{123} = (U_1\backslash V_1) \backslash A_{12}
\end{gather*}
and verify that $X= A_1+A_2+A_3+A_{12}+A_{13}+A_{23}+A_{123}$.
Conversely, given the partition~$A$ of~$X$, we can def\/ine
\begin{gather*}
U_1 = X\backslash (A_2+A_3 +A_{23})   , \qquad U_2 = X\backslash A_3   , \qquad V_1=A_1+A_{13}   , \qquad V_2 = A_1+A_2+A_{12}   .
\end{gather*}

\section{Tensor products for charades}\label{charades}

The term ``charade'' is intended in the sense of Kapranov \cite[Def\/inition~3.2]{Kapr1995} and
is related to Hall algebras (for example, see~\cite[Section~2]{UCL}).
One has a small abelian (or triangulated) category $\CA$ and looks
at functors def\/ined on the groupoid $\CA_{\mathrm{g}}$ of invertible morphisms in~$\CA$.
A~promonoidal structure is def\/ined on $\CA_{\mathrm{g}}$ using the short exact sequences
(or triangles) of~$\CA$. The functors are tensored using convolution.
Here we will only discuss the case where~$\CA$ is the category of f\/inite vector spaces
over a f\/ixed f\/inite f\/ield~$\mathbb{F}_q$.
We make a conjecture that a family of weighted monoidal structures exists and give some evidence for it.

Motivated by Proposition~\ref{alt2Ltensor}, we consider generalizing the tensor product of~\cite{GLFq}.
Let $\mathfrak{G}_q$ be the groupoid of f\/inite vector spaces over the f\/ield $\mathbb{F}_q$
of cardinality $q$; the morphisms are linear bijections.
We write $V\le U$ to mean $V$ is an $\mathbb{F}_q$-linear subspace of~$U$,
and we write $U/V$ for the quotient space.

To be specif\/ic, take $\CV = \mathrm{Vect}_{\mathbb{C}}$ to be the category of complex vector spaces with all linear functions.

Let $L\colon \mathfrak{G}_q \to \CV$ be a suitable functor: we will consider conditions on it later.

For functors $F, G \colon \mathfrak{G}_q \to \CV$, def\/ine $F\otimes^LG \colon \mathfrak{G}_q \to \CV$ by
\begin{gather}\label{qLtensor}
\big(F\otimes^LG\big)X = \sum_{V\le U\le X}{L(U/V) \otimes F(U)\otimes G(X/ V)}   .
\end{gather}

This leads us to an $n$-fold tensor product in a manner analogous to~\eqref{nfoldLtensor4}.
Def\/ine the {\em modified $n$-flag set} on any f\/inite $\mathbb{F}_q$-vector space $X$ by
\begin{gather*}
\mathrm{mFlg}_nX= \bigl\{(U,V) \, | \, U =  (0=U_0\le U_1\le \dots \le U_{n-1}\le U_n=X  ), \nonumber \\
\hphantom{\mathrm{mFlg}_nX= \bigl\{}{} V =  (V_0, V_1, \dots, V_{n-1}  ) \text{ with } V_i\le U_i \text{ for } 0\le i < n \bigr\}   .
\end{gather*}

Now we put
\begin{gather*}
 \otimes^L_{n}(F_1,\dots,F_n)X
  =   \sum_{(U,V)\in \mathrm{mFlg}_nX} {L(U_1\backslash V_1) \otimes \dots \otimes L(U_{n-1}\backslash V_{n-1}})
\\
  \phantom{\otimes^L_{n}(F_1,\dots,F_n)X =}{} \otimes  F_1 ( U_1\backslash V_0  ) \otimes \dots \otimes F_n ( U_n\backslash V_{n-1} )  .
\end{gather*}
The formula follows by repeated application of \eqref{qLtensor} in evaluating the left bracketing
\begin{gather*}
 \big(\cdots \big(F_1\otimes^LF_2\big)\otimes^L \dots \big)\otimes^LF_n
\end{gather*}
at $X$.

Let us look at the ternary tensor product
\begin{gather*}
\otimes^L_{3}(F,G,H)X = \big(\big(F\otimes^LG\big)\otimes^LH\big)X
\end{gather*}
It is a direct sum over modif\/ied 3-f\/lags $(U,V)$ on $X$;
that is, subspaces
$U_1\le U_2\le X$, $V_1\le U_1$ and $V_2\le U_2$.
From these we can uniquely def\/ine vector spaces $A_S$ for each
$\varnothing \ne S\subseteq \langle 3 \rangle$
via the following diagrams of short exact sequences:
\begin{gather*}
\xymatrix{
A_1 \ar@{{ >}->}[d]_-{}  \ar@{{ >}->}[r]^-{} & U_1\cap V_2 \ar@{{ >}->}[d]_-{} \ar@{->>}[r] & A_{12} \ar@{{ >}->}[d] \\
V_1  \ar@{->>}[d]_-{} \ar@{{ >}->}[r]^-{} & U_1 \ar@{->>}[d]_-{} \ar@{->>}[r]^-{} & U_1/V_1 \ar@{->>}[d]^-{}   \\
A_{13} \ar@{{ >}->}[r]_-{} & U_1/U_1\cap V_2 \ar@{->>}[r]_-{} & A_{123} \\
}
\\ 
\xymatrix{
U_1\cap V_2 \ar@{{ >}->}[d]_-{}  \ar@{{ >}->}[r]^-{} & V_2 \ar@{{ >}->}[d]_-{} \ar@{->>}[r] & A_{2} \ar@{{ >}->}[d] \\
U_1  \ar@{->>}[d]_-{} \ar@{{ >}->}[r]^-{} & U_2 \ar@{->>}[d]_-{} \ar@{->>}[r]^-{} & U_2/U_1 \ar@{->>}[d]^-{}   \\
U_1/U_1\cap V_2 \ar@{{ >}->}[r]_-{} & U_2/V_2 \ar@{->>}[r]_-{} & A_{23} \\
}
\\ 
\xymatrix{
U_2 \ar@{{ >}->}[r]_-{} & X \ar@{->>}[r]_-{} & A_{3}   ,
}
\end{gather*}
from which we see
\begin{gather}\label{directsumdecomp}
X \cong A_1\oplus A_2 \oplus A_3\oplus A_{12}\oplus A_{23}\oplus A_{13}\oplus A_{123}   .
\end{gather}
Note also the isomorphisms
\begin{gather*}
U_1/V_1 \cong A_{12}\oplus A_{123}   , \qquad U_2/V_2 \cong A_{13}\oplus A_{23} \oplus A_{123},  \\
U_1 \cong A_1\oplus A_{12}\oplus A_{13}\oplus A_{123}   , \qquad U_2/V_1 \cong A_{2}\oplus A_{12}\oplus A_{23}\oplus A_{123},    \\
X/V_2 \cong A_{3}\oplus A_{13}\oplus A_{23} \oplus A_{123}   .
\end{gather*}

On the other hand, we can see that the formula for the right bracketing is
\begin{gather*}
 \big(F\otimes^L\big(G\otimes^LH\big)\big)X
  =   \sum_{M\le N\le X, M\le I\le J\le X}{L(N/M)\otimes L(J/I)\otimes FN\otimes G(J/M)\otimes H(X/I)}.
\end{gather*}
We can see that this indexing set also leads to a direct sum decomposition~\eqref{directsumdecomp}
from the following diagrams of short exact sequences:
\begin{gather*}
\xymatrix{
A_1 \ar@{{ >}->}[d]_-{} \ar@{{ >}->}[r]^-{}
& U_1\cap V_2 \ar@{{ >}->}[d]_-{} \ar@{->>}[r] & A_{12} \ar@{{ >}->}[d] \\
V_1 \ar@{->>}[d]_-{} \ar@{{ >}->}[r]^-{} & U_1 \ar@{->>}[d]_-{} \ar@{->>}[r]^-{}
& U_1/V_1 \ar@{->>}[d]^-{} \\
A_{13} \ar@{{ >}->}[r]_-{} & U_1/U_1\cap V_2 \ar@{->>}[r]_-{} & A_{123} \\
}
\\ 
\xymatrix{
I\cap N \ar@{{ >}->}[d]_-{}  \ar@{{ >}->}[r]^-{} & I \ar@{{ >}->}[d]_-{} \ar@{->>}[r] & A_{2} \ar@{{ >}->}[d] \\
J\cap N  \ar@{->>}[d]_-{} \ar@{{ >}->}[r]^-{} & J \ar@{->>}[d]_-{} \ar@{->>}[r]^-{} & J/J\cap N \ar@{->>}[d]^-{}   \\
A_{123} \ar@{^{(}->}[r]_-{} & J/I \ar@{->>}[r]_-{} & A_{23} \\
}
\\ 
\xymatrix{
J+N \ar@{{ >}->}[r]_-{} & X \ar@{->>}[r]_-{} & A_{3}  , \qquad J\cap N \ar@{{ >}->}[r]_-{} & N \ar@{->>}[r]_-{} & A_{13}  .
}
\end{gather*}
Note also the isomorphisms
\begin{gather*}
N/M \cong A_{12}\oplus A_{13}\oplus A_{123}  , \qquad J/I \cong A_{23} \oplus A_{123}, \qquad
N \cong A_1\oplus A_{12}\oplus A_{13}\oplus A_{123}   , \\
 J/M \cong A_{2}\oplus A_{12}\oplus A_{23}\oplus A_{123},\qquad
X/I \cong A_{3}\oplus A_{13} \oplus A_{23} \oplus A_{123}   .
\end{gather*}

In order to have an associativity isomorphism we at least need a canonical isomorphism
\begin{gather*}
 L(A_{12}\oplus A_{123})\otimes L(A_{13}\oplus A_{23} \oplus A_{123})
  \cong   L(A_{12}\oplus A_{13}\oplus A_{123}) \otimes L(A_{23} \oplus A_{123})   .
\end{gather*}
We do have such an isomorphism if $L\colon \mathfrak{G}_q\to \CV$ takes direct sums to tensor products;
of course, direct sum of $\mathbb{F}_q$-vector spaces is neither product nor coproduct in~$\mathfrak{G}_q$.
This is still merely evidence that the desired associativity isomorphism should exist: it
is not a complete def\/inition.

Recall from \cite{GLFq} that $\mathfrak{G}_q$ has a braided promonoidal structure.
The convolution structure on $[\mathfrak{G}_q,\CV]$ arising from this
(as per Day~\cite{DayPhD, DayConv})
is precisely the tensor product $F\otimes^J G$ where $JX = I$ for $X=0$ and $JX=0$ for
$X\ne 0$.

\begin{Conjecture}
If $L$ is braided strong promonoidal then~\eqref{qLtensor} defines a
monoidal structure $\otimes^L$ on $[\mathfrak{G}_q,\CV]$.
\end{Conjecture}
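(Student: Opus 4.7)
The natural plan is to prove the conjecture via Day's convolution theorem \cite{DayPhD, DayConv}: exhibit a $\CV$-enriched promonoidal structure $\mathrm{P}^L$ on $\mathfrak{G}_q$ whose Day convolution recovers \eqref{qLtensor}, so that monoidality of $\otimes^L$ on $[\mathfrak{G}_q, \CV]$ is reduced to the promonoidal coherence axioms on $\mathrm{P}^L$. By analogy with Section~\ref{psoS}, I would set
\[
\mathrm{P}^L(A, B; X) \;=\; \sum_{V \le U \le X}\, L(U/V) \otimes \mathfrak{G}_q(A, U) \otimes \mathfrak{G}_q(B, X/V),
\]
with promonoidal unit $J$ given by $J(0) = I$ and $JX = 0$ otherwise. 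A coYoneda reduction (integrating out $A$ and $B$) then immediately yields
\[
\int^{A,B} \mathrm{P}^L(A,B;X) \otimes FA \otimes GB \;\cong\; \sum_{V \le U \le X} L(U/V) \otimes FU \otimes G(X/V),
\]
so $\otimes^L$ is the Day convolution against $\mathrm{P}^L$. The unit law is built into the definition of $J$.

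The content of the theorem is then the associativity of $\mathrm{P}^L$, which I would approach exactly as in Section~\ref{titac}: represent the $n$-fold convolution of $F_1,\dots,F_n$ at $X$ as a coend over modified $n$-flags on $X$ (as in the formula preceding the Conjecture), and check that both the left and right bracketings of the ternary product biject with the seven-term direct-sum decompositions $X \cong \bigoplus_{\varnothing \neq S \subseteq \langle 3\rangle} A_S$. The excerpt already exhibits these bijections on both sides from modified $3$-flag data, so the remaining work is to identify the $L$-weights canonically and to push the comparison through the Mac~Lane pentagon. Higher coherence is then handled as in Section~\ref{titac}: the pentagon reduces, after the flag reparameterization, to a single instance of the braided promonoidal coherence of $L$ together with the symmetry of $\mathfrak{G}_q$.

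The main obstacle is to make precise what ``braided strong promonoidal'' means for $L$, and to verify that it furnishes exactly the coherence cells demanded by the associator. The concrete canonical isomorphism called for by the ternary comparison is
\[
L(A_{12} \oplus A_{123}) \otimes L(A_{13} \oplus A_{23} \oplus A_{123}) \;\cong\; L(A_{12} \oplus A_{13} \oplus A_{123}) \otimes L(A_{23} \oplus A_{123}),
\]
and the author already observes that asking $L$ to be strong monoidal for direct sum is inadequate, since $\oplus$ is neither product nor coproduct in the groupoid $\mathfrak{G}_q$. The correct interpretation should be that $L$ is a strong promonoidal morphism from the braided short-exact-sequence promonoidal structure on $\mathfrak{G}_q$ of \cite{GLFq} to $\CV$; such an $L$ supplies the required isomorphisms coherently from the $3{\times}3$-lemma filtrations implicit in the subquotient diagrams at the end of Section~\ref{charades}. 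Assembling those cells into the pentagon for $\otimes^L$ --- in the absence of a literal monoidal structure on $\mathfrak{G}_q$ --- is the technical heart of the proof and the essential distinction from the species case of Section~\ref{titac}.
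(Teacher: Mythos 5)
This statement is labelled a \emph{Conjecture} in the paper, and the paper itself offers no proof --- only ``some evidence,'' namely the ternary comparison via the seven subquotients $A_S$ and the observation that one at least needs the isomorphism $L(A_{12}\oplus A_{123})\otimes L(A_{13}\oplus A_{23}\oplus A_{123})\cong L(A_{12}\oplus A_{13}\oplus A_{123})\otimes L(A_{23}\oplus A_{123})$, followed by the explicit caveat that ``this is still merely evidence\dots it is not a complete definition.'' Your proposal reproduces essentially that same evidence: your $\mathrm{P}^L(A,B;X)$ is, after the identification $U=\mathrm{im}f$, $V=\ker g$, exactly the paper's $\mathrm{P}(A,B;X)=\sum_{(f,g)\in\mathrm{Spes}(A,B;X)}L(\mathrm{im}(g\circ f))$, and your ternary analysis is the one already carried out at the end of Section~\ref{charades}. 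So you have correctly identified the intended route, but you have not proved anything beyond what the paper already records as open.

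The genuine gap is the one you yourself flag and then wave past: the assertion that a ``braided strong promonoidal'' $L$ ``supplies the required isomorphisms coherently from the $3\times 3$-lemma filtrations'' is precisely the unproven content of the conjecture, not a lemma you can cite. Two concrete obstructions remain. First, unlike the species case of Section~\ref{titac}, where $X=\sum_S A_S$ is an honest partition and the left- and right-bracketed indexing data biject on the nose, here the decomposition~\eqref{directsumdecomp} is only a non-canonical isomorphism assembled from splittings of short exact sequences; the modified $3$-flags $(U_1\le U_2\le X, V_1\le U_1, V_2\le U_2)$ and the data $(M\le N\le X,\ M\le I\le J\le X)$ of the right bracketing do not come with a given bijection, and the subquotients $U_1/V_1$, $J/I$, etc.\ are quotients rather than canonical subspaces, so one must construct the comparison isomorphism functorially in $X$ and then verify naturality. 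Second, you never define what ``braided strong promonoidal'' means for $L\colon\mathfrak{G}_q\to\CV$ relative to the short-exact-sequence promonoidal structure of~\cite{GLFq}, nor check that such a structure produces associator components satisfying the pentagon; the coherence argument of Section~\ref{titac} leans on $L$ landing in the monoidal centre and on an actual monoidal structure ($+$) on $\mathfrak{S}$, neither of which is available for $\oplus$ on $\mathfrak{G}_q$. Until those two steps are carried out, the statement remains a conjecture, and your proposal is a plan rather than a proof.
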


Should this be the case, the tensor $\otimes^L$ on $[\mathfrak{G}_q,\CV]$ would be obtained
from quite an interesting promonoidal structure on~$\mathfrak{G}_q$.
A short sequence
\begin{gather}\label{spes}
\begin{split}
& \xymatrix{
A \ar@{{ >}->}[r]^-{f} & X \ar@{->>}[r]^-{g} & B
}
\end{split}
\end{gather}
in $\mathrm{Vect}_{\mathbb{F}_q}$ might be called {\em short pre-exact}
when $f$ is a monomorphism, $g$ is an epimorphism and $\mathrm{ker}g \le \mathrm{im}f$.
Write $\mathrm{Spes}(A,B;X)$ for the set of such~$(f,g)$.
Put
\begin{gather*}
\mathrm{P}(A,B;X) = \sum_{(f,g)\in \mathrm{Spes}(A,B;X)}{L (\mathrm{im}(g\circ f) )}   .
\end{gather*}
This $\mathrm{P} \colon \mathfrak{G}_q^{\mathrm{op}}\times \mathfrak{G}_q^{\mathrm{op}}\times \mathfrak{G}_q \lra \CV$,
def\/ined on morphisms in the obvious way, would give the promonoidal structure in question.
The term $L (\mathrm{im}(g\circ f) )$ measures the failure of the sequence~\eqref{spes}
to be exact.

\section{The dimension sequence}\label{tds}

Following on from Section~\ref{charades}, we take $F\in [\mathfrak{G}_q,\mathrm{Vect}_{\mathbb{C}}]$
and def\/ine its {\em dimension sequence} $\dim F\in \mathbb{Z}^{\mathbb{N}}$ by
\begin{gather*}
(\dim F)n = \dim \big(F\big(\mathbb{F}_q^n\big)\big)   .
\end{gather*}

This inspires an algebra structure on $A^{\mathbb{N}}$ for any $k$-algebra $A$.
We assume we have $\lambda \in k$ as before, but also some integer~$q$
(not necessarily a prime power).
As in~\cite{GLFq}, we use
\begin{gather*}
\phi_n(q) = \big(q^n-1\big)\big(q^{n-1}-1\big)\cdots (q-1)   .
\end{gather*}
We def\/ine
\begin{gather*}
{n \brack r,s}_{q} = \frac{\phi_n(q)}{\phi_r(q)\phi_s(q)}, \qquad {n \brack r,s,t}_{q} = \frac{\phi_n(q)}{\phi_r(q)\phi_s(q) \phi_t(q)}, \qquad \dots   .
\end{gather*}

For $f,g \in A^{\mathbb{N}}$, put
\begin{gather*}
f \cdot^{\lambda}_q g = \sum_{r+s+t=n}{{n \brack r,s,t}_{q} \lambda^t f(r+t) g(s+t)}  .
\end{gather*}
The calculations of Section~\ref{charades} show that this is associative at least when $A=\mathbb{Z}$,
$q$~is a prime power and $\lambda = \dim L(\mathbb{F})$.

More generally, I claim $A^{\mathbb{N}}$ is an associative $k$-algebra.

\begin{Proposition}\label{dim}
$\dim (F\otimes^L G) = \dim F \cdot^{\lambda}_q \dim G$
\end{Proposition}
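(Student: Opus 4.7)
The plan is to unpack the definition of the tensor product, take dimensions, and regroup the sum by the dimensions of the nested subspaces so as to recognise the $q$-trinomial coefficients.

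Starting from the defining formula of \eqref{qLtensor} with $X = \mathbb{F}_q^n$, taking dimensions yields
\begin{gather*}
\dim \big(F\otimes^L G\big)\big(\mathbb{F}_q^n\big) = \sum_{V\le U\le \mathbb{F}_q^n}{\dim L(U/V)\cdot \dim F(U)\cdot \dim G\big(\mathbb{F}_q^n/V\big)}.
\end{gather*}
The first step is to stratify this sum by the triple of dimensions $r = \dim V$, $t = \dim(U/V)$, $s = \dim(\mathbb{F}_q^n/U)$, which satisfies $r+s+t=n$. With these conventions $\dim U = r+t$ and $\dim(\mathbb{F}_q^n/V) = s+t$, so the contributions $F(U)$ and $G(\mathbb{F}_q^n/V)$ contribute the factors $(\dim F)(r+t)$ and $(\dim G)(s+t)$ that appear in the $q$-deformed Hurwitz product.

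The second step is the combinatorial count: the number of nested pairs $V\le U \le \mathbb{F}_q^n$ with prescribed dimensions $(r,t,s)$ factorises as ${n \brack r+t}_q {r+t \brack r}_q$ (first choose $U$ of dimension $r+t$, then $V$ of dimension $r$ inside it), and this product collapses to the $q$-trinomial coefficient ${n \brack r,t,s}_q = \phi_n(q)/(\phi_r(q)\phi_t(q)\phi_s(q))$ by the usual cancellation of $\phi_{r+t}(q)$.

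The third step is to evaluate $\dim L(U/V) = \dim L(\mathbb{F}_q^t)$. Here we invoke the hypothesis (already flagged in Section~\ref{charades} as needed for the associator) that $L$ sends direct sums to tensor products, whence $L(\mathbb{F}_q^t) \cong L(\mathbb{F}_q)^{\otimes t}$ and so $\dim L(\mathbb{F}_q^t) = \lambda^t$ with $\lambda = \dim L(\mathbb{F}_q)$. Assembling the three steps gives
\begin{gather*}
\dim \big(F\otimes^L G\big)\big(\mathbb{F}_q^n\big) = \sum_{n=r+s+t}{{n \brack r,s,t}_q \lambda^t (\dim F)(r+t)(\dim G)(s+t)},
\end{gather*}
which is exactly $\big(\dim F \cdot^{\lambda}_q \dim G\big)(n)$.

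The main obstacle is the bookkeeping for the counting step, but this is really just an application of the standard identity ${n \brack a+t}_q {a+t \brack a}_q = {n \brack a,t,s}_q$, so the only conceptual point is the multiplicativity of $\dim L$ on direct sums; this is where the qualifier ``suitable'' on $L$ in Section~\ref{charades} is used. Once that is granted, the proof is purely a matter of reindexing.
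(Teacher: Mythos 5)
Your proof is correct; note that the paper states Proposition~\ref{dim} without proof (just as it does Proposition~\ref{card}), and your argument is precisely the intended one: take dimensions in~\eqref{qLtensor}, stratify the nested pairs $V\le U\le \mathbb{F}_q^n$ by $(r,t,s)=(\dim V,\dim U/V,\dim \mathbb{F}_q^n/U)$, count them via ${n \brack r+t,s}_q{r+t \brack r,t}_q={n \brack r,s,t}_q$, and use $\dim L(U/V)=\lambda^t$ with $\lambda=\dim L(\mathbb{F}_q)$. Your identification of where the ``suitability'' of $L$ enters (multiplicativity of $\dim L$ on direct sums, consistent with the text's remark that $\lambda=\dim L(\mathbb{F})$) is exactly right.
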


\subsection*{Acknowledgements}

I am grateful to the referees for their careful work and, in particular, for pointing out the references
\cite{AFM2015, AM2006, MoreiraPhD}. The author gratefully acknowledges the support of Australian Research Council Discovery Grant DP130101969.

\pdfbookmark[1]{References}{ref}
\LastPageEnding

\end{document}